\newtheorem{definition}{Definition}
\newtheorem{proposition}[definition]{Proposition}
\newtheorem{theorem}[definition]{Theorem}
\newtheorem{lemma}[definition]{Lemma}
\newtheorem{conjecture}[definition]{Conjecture}
\newtheorem{question}[definition]{Problem}
\newcommand{\trsp}[1]{#1^\intercal} % transpose
\newcommand{\id}{\textrm{Id}} % identity matrix
\newcommand{\ph}[1]{\mathsf{#1}} % general polyhedron
\newcommand*{\LargerCdot}{\raisebox{-0.25ex}{\scalebox{1.5}{$\cdot$}}}
\title{Maximal determinants of combinatorial matrices}
\author{Henning Bruhn \and Dieter Rautenbach}
\date{}
\begin{document}
\maketitle

\begin{abstract}
We prove that $\det A\leq 6^\frac{n}{6}$ whenever $A\in\{0,1\}^{n\times n}$ contains at most~$2n$ ones. 
We also 
prove an  upper bound on the determinant of matrices with the $k$-consecutive ones property, 
a generalisation of the consecutive ones property, where each row is allowed to have up to $k$ blocks of ones. 
Finally, we prove an upper bound on the determinant of a path-edge incidence matrix in a tree
and use that to bound the leaf rank of a graph in terms of its order. 
\end{abstract}

\section{Introduction}

There is a rich tradition of bounding the determinants of matrices with all entries $0$ or $1$ (or $-1$ and $1$). 
Yet, even very simple questions remain open. For instance: 
\begin{question}\label{mainq}
Given a matrix $A\in\{0,1\}^{n\times n}$
with at most $2n$ ones, how large can its determinant be? 
\end{question}
In fact, Problem \ref{mainq} is open for any linear number $O(n)$ of ones in $A$,
and we consider the case of at most $2n$ as a simple and restricted
representative. Furthermore, this case naturally relates to edge-vertex incidence matrices of graphs,
which we exploit below.

We give an answer to this question
as well as to questions about the determinants of similarly simple matrices.

The most prominent question in this area is probably \emph{Hadamard's maximal determinant problem}:
Given $A\in\{-1,1\}^{n\times n}$, how large can $\det A$ be? 
A partial answer lies in \emph{Hadamard's inequality}~\cite{ha}. 
For this, let us denote the $i$th row of a matrix $A\in\mathbb R^{n\times n}$ by $A_{i,\LargerCdot}$,
and similarly we write $A_{\LargerCdot,j}$ for the $j$th column.
Then
\begin{equation}\label{eh}
|\det(A)|\leq \prod\limits_{i=1}^n ||A_{i,\LargerCdot}||_2.
\end{equation}
If $A$ is a $-1/1$-matrix, then (\ref{eh}) implies 
$|\det(A)|\leq n^{n/2}$,
and $A$ is a {\it Hadamard matrix} of order $n$ if this inequality holds with equality.
It is known that a Hadamard matrix of order $n$ can exist only if $n\in \{ 1,2\}\cup 4\mathbb{N}$.
Paley's conjecture~\cite{pa} states that for all these orders, Hadamard matrices actually do exist.
 
Coming back to Problem~\ref{mainq}, 
with the inequality of the arithmetic and geometric mean, it is  not hard to derive from~\eqref{eh} 
that $\det A\leq 2^\frac{n}{2}$ for any $A\in\{0,1\}^{n \times n}$ with 
at most $2n$ ones. % needs inequality for arith/geom mean
Ryser improved this estimate, and at the same time extended it to cover 
more general numbers of $1$-entries in $A$:
\begin{theorem}[Ryser~\cite{ry}]\label{rythm}
Let $A\in\{0,1\}^{n\times n}$ be a matrix containing exactly $kn$ ones.
If $n\geq 2$ and $1\leq k\leq \frac{n+1}{2}$ then, for  $\lambda=k(k-1)/(n-1)$, it follows that 
\[
|\det A|\leq k(k-\lambda)^{\frac{1}{2}(n-1)}.
\] 
\end{theorem}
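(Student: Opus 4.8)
The plan is to pass to the positive semidefinite matrix $M=\trsp{A}A$, whose determinant equals $(\det A)^2$, and to control $\det M$ through its spectrum. First I would dispose of the singular case (where $\det A=0$ and there is nothing to prove) and assume $A$ is invertible, so that $M$ is positive definite with eigenvalues $\theta_1\ge\theta_2\ge\cdots\ge\theta_n>0$. The first observation is that the trace is pinned down: since $A$ has $0/1$ entries, $\trsp{A}A$ carries $\sum_{i,j}A_{ij}^2=\sum_{i,j}A_{ij}=kn$ on its diagonal, so $\sum_i\theta_i=\operatorname{tr}M=kn$.

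With the trace fixed, the natural move is AM--GM applied to all but the largest eigenvalue: $\prod_{i=2}^n\theta_i\le\bigl(\tfrac{1}{n-1}\sum_{i=2}^n\theta_i\bigr)^{n-1}=\bigl(\tfrac{kn-\theta_1}{n-1}\bigr)^{n-1}$, whence $\det M\le f(\theta_1)$ where $f(x)=x\bigl(\tfrac{kn-x}{n-1}\bigr)^{n-1}$. A one-line differentiation shows that $f$ increases on $[0,k]$ and decreases on $[k,kn]$, peaking at $x=k$.

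The crucial extra ingredient---and the step I expect to be the real heart of the argument---is a lower bound on $\theta_1$ that is quadratic rather than linear in $k$. Testing the Rayleigh quotient of $M$ on the all-ones vector $\1$ gives $\theta_1\ge\tfrac{\trsp{\1}\trsp{A}A\1}{\trsp{\1}\1}=\tfrac{\|A\1\|_2^2}{n}=\tfrac{1}{n}\sum_i r_i^2$, where $r_i$ denotes the $i$th row sum; since $\sum_i r_i=kn$, Cauchy--Schwarz yields $\sum_i r_i^2\ge(kn)^2/n$, hence $\theta_1\ge k^2$. Only the linear bound $\theta_1\ge k$ is automatic, and it alone gives the weaker estimate $|\det A|\le k^{n/2}$; extracting the quadratic bound $k^2$ is exactly what produces Ryser's sharper constant.

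Finally I would combine these. Because $k\ge 1$ we have $k^2\ge k$, so $\theta_1$ lies in the range where $f$ is decreasing; together with $\theta_1\le kn$ this gives $\det M\le f(\theta_1)\le f(k^2)=k^2\bigl(\tfrac{k(n-k)}{n-1}\bigr)^{n-1}=k^2(k-\lambda)^{n-1}$, using the identity $k-\lambda=k(n-k)/(n-1)$. Taking square roots yields $|\det A|\le k(k-\lambda)^{\frac12(n-1)}$. The hypotheses $1\le k\le\frac{n+1}{2}$ comfortably keep every quantity in range (in particular $k\le n$ ensures $k^2\le kn$ and $k-\lambda\ge 0$); and tracing the inequalities backwards, equality throughout forces $M=(k-\lambda)\id+\lambda J$ with $J$ the all-ones matrix, i.e. $A$ is the incidence matrix of a symmetric design, which is the expected extremal configuration.
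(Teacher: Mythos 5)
Your proposal is correct, but there is nothing in the paper to compare it against: Theorem~\ref{rythm} is Ryser's theorem, which the authors quote from~\cite{ry} and never prove. Judged on its own merits, your spectral argument is sound, and it is essentially the classical route to this inequality. The trace identity $\operatorname{tr}(\trsp{A}A)=kn$, the AM--GM bound $\det(\trsp{A}A)\le f(\theta_1)$ with $f(x)=x\left(\frac{kn-x}{n-1}\right)^{n-1}$, the monotonicity of $f$ (increasing on $[0,k]$, decreasing on $[k,kn]$), the Rayleigh/Cauchy--Schwarz estimate $\theta_1\ge\frac{1}{n}\sum_i r_i^2\ge k^2$ (where $r_i$ denotes the $i$th row sum of $A$, so $\sum_i r_i=kn$), and the identity $k-\lambda=\frac{k(n-k)}{n-1}$, which turns $f(k^2)$ into $k^2(k-\lambda)^{n-1}$, all check out. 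Two observations. First, you never invoke the hypothesis $k\le\frac{n+1}{2}$: your argument needs only $1\le k\le n$ (namely $k\ge 1$ so that $k^2\ge k$, and $k\le n$ so that $k^2\le kn$ and $k-\lambda\ge 0$), so you in fact prove the inequality on a wider range than stated; this is not a flaw --- the restriction is not needed for the validity of the bound itself, and the aspect of Ryser's theorem that the paper actually relies on in its Discussion section is the equality characterisation. Second, your closing remark on the equality case is also correct and needs only a routine computation to complete: equality throughout forces $\theta_2=\dots=\theta_n$, constant row sums equal to $k$ with $\1$ a $\theta_1$-eigenvector, and $\theta_1=k^2$, whence $\trsp{A}A=(k-\lambda)\id+\lambda J$ for $J$ the all-ones matrix; this is consistent with the characterisation via $(n,k,\lambda)$-configurations that the paper records in Section~2.
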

% thm is reformulated slightly: got rid of one of the two cases of Ryser
Moreover, Ryser showed that the bound is tight for many pairs of $k,n$; see next section.
Inspecting Ryser's bound, we see that in the situation of Question~\ref{mainq}, when $k\leq 2$, 
it yields a bound of $\det A\leq 2(2-o(1))^\frac{n-1}{2}$, which is not much better than the 
bound of $2^\frac{n}{2}$ that we get from Hadamard's inequality. 
Given that Ryser's bound is tight for many $k,n$, 
does that mean that we cannot hope for a better bound? No, it turns out. 
Our main result is:

\begin{theorem}\label{2nboundthm}
If $A\in \{ 0,1\}^{n\times n}$ has at most $2n$ non-zero entries, then 
$|\det(A)|\leq 2^{n/6}\cdot 3^{n/6}\approx 1.348^n.$
\end{theorem}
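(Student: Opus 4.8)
The plan is to prove the cleaner and slightly stronger statement that $|\det(A)|\le 6^{(m-n)/6}$ for \emph{every} $A\in\{0,1\}^{n\times n}$ with exactly $m$ ones; since $m\le 2n$ gives $6^{(m-n)/6}\le 6^{n/6}$, this implies the theorem. The point of tracking the \emph{excess} $m-n$ rather than $m$ is that it behaves monotonically under the row/column deletions used below. I would argue by strong induction on $n$, thinking of $A$ through its bipartite \emph{incidence graph} $H$ on the rows and columns, where $A_{ij}=1$ is an edge; thus $|E(H)|=m$, and $\det(A)\ne 0$ forces every row and column to have degree at least $1$.

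First the easy reductions. If some row or column has no $1$, then $\det(A)=0$. If some row $i$ has a single $1$, lying in a column $j$ of degree $d\ge 1$, expand along row $i$: then $|\det(A)|=|\det(A')|$, where $A'$ deletes row $i$ and column $j$, hence has $m-d$ ones and size $n-1$, so excess $(m-d)-(n-1)=(m-n)-(d-1)\le m-n$, and induction closes the case; the column case is symmetric. After these peelings every row and column of $H$ has degree at least $2$.

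Now a trichotomy. \textbf{(i)} If every degree is at least $3$, all row sums satisfy $r_i\ge 3$, so Hadamard's inequality~\eqref{eh} with AM--GM gives $|\det(A)|\le\prod_i\sqrt{r_i}\le (m/n)^{n/2}$; writing $c=m/n\ge 3$, the claim reduces to the one-variable inequality $\sqrt c\le 6^{(c-1)/6}$, which I would verify for all $c\ge 3$. \textbf{(ii)} If some degree-$2$ vertex $v$ has a neighbour of degree at least $3$, expand along $v$: its two neighbours $w_1,w_2$, of degrees $e_1,e_2$, give $|\det(A)|\le|\det(A_1)|+|\det(A_2)|$, where $A_k$ deletes $v$ and $w_k$ and so has excess $(m-n)-e_k$; by induction this is at most $6^{(m-n)/6}\bigl(6^{-e_1/6}+6^{-e_2/6}\bigr)$, and since $\{e_1,e_2\}\ne\{2,2\}$ the bracket is at most $1$ (worst case $\{2,3\}$ gives $6^{-1/3}+6^{-1/2}\approx 0.96$). \textbf{(iii)} Otherwise every degree-$2$ vertex has both neighbours of degree $2$, so the degree-$2$ vertices span $2$-regular components, i.e.\ even cycles; splitting off such a cycle $C$ on $2\ell$ edges gives $\det(A)=\det(A_C)\det(A_{\mathrm{rest}})$ with $|\det(A_C)|\le 2\le 6^{\ell/6}$ (it is $0$ unless $\ell\ge 3$ is odd), and since the excess of $C$ is exactly $\ell$ the two factors recombine to $6^{(m-n)/6}$.

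The conceptual step is setting up the right induction: realising that $m-n$ is the quantity preserved by deletions, and that the bound is really a statement about all $0/1$ matrices. The genuinely delicate point, and the main obstacle, is the degree-$2$ analysis. A cofactor expansion along a degree-$2$ vertex costs a factor $6^{-e_1/6}+6^{-e_2/6}$, which exceeds $1$ precisely when both neighbours also have degree $2$; recognising that this bad configuration can only occur inside a pure even-cycle component — which must therefore be peeled off as a block via $|\det|\le 2$ rather than by expansion — is what makes the induction close. Finally I would check that the constant $6$ is exactly what simultaneously validates the Hadamard step at average degree $3$ and the expansion step at the pair $\{2,3\}$; these are the two tight constraints that pin down the base of the exponent.
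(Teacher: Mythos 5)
Your proof is correct, and it takes a genuinely different route from the paper's. The paper works with Gramians: it expands away the single-one rows, splits the remaining rows into those with exactly two ones and those with at least three, bounds the two-ones block by the graph-theoretic Lemma~\ref{2nlem} (disjoint edges and triangles being the extremal components), controls the heavy rows by the projection inequality~\eqref{eg4} together with AM--GM, and finishes with a two-variable maximisation whose optimum at $n_1=n_2=n_3=n/3$ produces $2^{n/6}3^{n/6}$. You instead prove the stronger statement $|\det A|\le 6^{(m-n)/6}$ for every $0/1$ matrix with exactly $m$ ones, by induction on $n$ in the bipartite incidence graph, and all the key steps check out: the excess $m-n$ drops by exactly $e_k$ in the cofactor minors, your three cases are exhaustive once vertices of degree at most $1$ are peeled, the inequality $\sqrt{c}\le 6^{(c-1)/6}$ does hold for all $c\ge 3$ (it reduces to $3\ln c\le(c-1)\ln 6$, true at $c=3$ and preserved since $\ln 6>3/c$), the bracket satisfies $6^{-1/3}+6^{-1/2}\approx 0.959\le 1$, one has $2\le 6^{\ell/6}$ for $\ell\ge 3$, and the claim holds vacuously at negative excess because then some row is zero. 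Your route is more elementary (no Gramian machinery, no continuous optimisation), it gives a bound for every density $m$, and it strictly improves the stated theorem whenever $m<2n$; the paper's route, in exchange, produces the tight and independently interesting Lemma~\ref{2nlem} and Theorem~\ref{theoremgramtree}, and its optimisation identifies the worst-case row profile. One correction to your closing remark: neither of your two constraints is actually tight at base $6$, so $6$ is not ``pinned down.'' Your argument runs verbatim with $6^{1/6}$ replaced by any $\beta$ satisfying $\beta^{-2}+\beta^{-3}\le 1$, $\sqrt{3}\le\beta^{2}$, and $2\le\beta^{3}$; the binding constraint is the first one, whose critical value is the real root $\rho\approx 1.3247$ of $\beta^{3}=\beta+1$ (the plastic number), since $\sqrt{3}\le\rho^2\approx 1.755$ and $2\le\rho^3\approx 2.325$. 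Hence your method in fact proves $|\det A|\le \rho^{\,m-n}\le \rho^{\,n}\approx 1.325^{\,n}$ for $m\le 2n$, which is better than the bound of Theorem~\ref{2nboundthm} and a step closer to Conjecture~\ref{conj2nentries}.
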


While a substantial improvement on the bound of $2^\frac{n}{2}\approx 1.414^n$,
the bound in Theorem~\ref{2nboundthm} 
is almost certainly not best possible. Indeed, the best lower bound we have found 
comes from matrices of the kind
\[
A=\text{diag}(C,\ldots,C),\text{ where }C=\begin{pmatrix}1 & 1 & 0\\0& 1 & 1\\ 1& 0 & 1\end{pmatrix},
\]
that have determinant $2^\frac{n}{3}\approx 1.260^n$. We believe 
that larger determinants are impossible:
\begin{conjecture}\label{conj2nentries}
If $A\in \{ 0,1\}^{n\times n}$ has at most $2n$ non-zero entries, then $|\det(A)|\leq 2^{n/3}$.
\end{conjecture}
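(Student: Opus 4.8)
The plan is to prove the following scale-invariant strengthening, from which Conjecture~\ref{conj2nentries} is immediate: for every $A\in\{0,1\}^{n\times n}$ with exactly $e$ nonzero entries,
\[
|\det(A)|\leq 2^{(e-n)/3}.
\]
Since $e\leq 2n$ forces $(e-n)/3\leq n/3$, this yields $|\det(A)|\leq 2^{n/3}$. The advantage of this reformulation is that $e-n$ is additive over the connected components of the bipartite graph $G$ whose vertex classes are the rows and columns of $A$ and whose edges are the $1$-entries, while $\det$ is multiplicative over these components (each of which is square whenever $A$ is nonsingular); moreover $e-n$ turns out to be exactly the right potential for an induction on $n$.

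First I would peel off low-degree vertices of $G$. If $A$ has a zero row or column then $\det(A)=0$ and the bound is trivial, so assume not. If some row has a single $1$, in column $j$, then Laplace expansion gives $|\det(A)|=|\det(A')|$, where $A'$ arises by deleting that row together with column $j$; thus $A'$ is $(n-1)\times(n-1)$ with $e'=e-\deg(j)$ ones, so $e'-(n-1)=(e-n)-(\deg(j)-1)\leq e-n$ because $\deg(j)\geq 1$. The potential does not increase and the inductive hypothesis closes the case; the symmetric argument handles a column with a single $1$. Iterating, we reduce to the case in which every row and every column has at least two $1$s, which in particular forces $e\geq 2n$.

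The heart of the matter is the minimum-degree-$2$ core. When $A$ is exactly $2$-regular, i.e. $e=2n$, the graph $G$ is a disjoint union of even cycles, and after permuting rows and columns each component on $\ell$ rows becomes $I+Z$ with $Z$ a cyclic shift; its determinant equals $\prod_{k=0}^{\ell-1}(1+\omega^{k})=1-(-1)^{\ell}$, with $\omega$ a primitive $\ell$-th root of unity, which is $0$ for even $\ell$ and of absolute value $2$ for odd $\ell$. Hence a nonsingular $2$-regular matrix has only odd components, each spanning at least three rows, so $|\det(A)|=2^{\#\text{components}}\leq 2^{n/3}=2^{(e-n)/3}$, with equality exactly when every component is a copy of $C$. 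This recovers the extremal matrices of Conjecture~\ref{conj2nentries} and confirms that $e-n$ is the correct exponent.

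The remaining, and genuinely hard, case is a core of minimum degree $2$ that is not $2$-regular, so $e>2n$ and some vertices have degree $\geq 3$. The natural move is to expand along a row $i$ of degree $2$, with $1$s in columns $a,b$, giving $|\det(A)|\leq|\det(A_{\hat\imath\hat a})|+|\det(A_{\hat\imath\hat b})|$; applying the inductive bound to the two minors produces the factor $\bigl(2^{-\deg(a)/3}+2^{-\deg(b)/3}\bigr)\,2^{(e-n)/3}$, which is at most $2^{(e-n)/3}$ precisely when $\deg(a),\deg(b)\geq 3$. So expansion succeeds as soon as some degree-$2$ row meets two columns of higher degree, and the entire difficulty is concentrated in configurations where degree-$2$ rows meet degree-$2$ columns — exactly where the cyclic cancellation exploited in the $2$-regular case is invisible to the triangle inequality, which then overcounts by the fatal factor $2^{1/3}$. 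I expect this to be the main obstacle, and it is precisely what separates Theorem~\ref{2nboundthm} from the conjectured bound. The most promising route seems to be an amortized (discharging) refinement that treats the degree-$2$ part of $G$ by contracting its paths and accounting for cycle cancellation globally, charging the excess $e-2n$ carried by the higher-degree vertices against the determinant gain, rather than bounding each minor in isolation.
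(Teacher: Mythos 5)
You should note first that there is no paper proof to compare against: the statement is Conjecture~\ref{conj2nentries}, which the paper explicitly leaves open, proving only the weaker bound $2^{n/6}3^{n/6}$ (Theorem~\ref{2nboundthm}) and the special case in which every row has at most two ones (Proposition~\ref{proposition2perrow}). Your proposal, read as a proof, has a genuine gap, which to your credit you acknowledge yourself. The parts that work are correct: the peeling of rows and columns with a single $1$ does not increase the potential $e-n$, and the $2$-regular core is handled rigorously, since at $e=2n$ with minimum degree $2$ on both sides the bipartite row--column graph is a disjoint union of cycles, each block has determinant $1-(-1)^{\ell}$, and nonsingularity forces odd components with $\ell\geq 3$ rows, giving $|\det(A)|\leq 2^{n/3}$. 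But the remaining case, a core of minimum degree $2$ with $e>2n$, is exactly where the proof stops being a proof. As you compute, Laplace expansion along a degree-$2$ row with ones in columns $a$ and $b$ yields the factor $2^{-\deg(a)/3}+2^{-\deg(b)/3}$, which exceeds $1$ whenever $\min\{\deg(a),\deg(b)\}=2$, since $2^{-2/3}+2^{-d/3}>1$ for every $d$. The ``amortized discharging refinement'' you invoke at that point is a research programme, not an argument: no contraction scheme, no charging rule, and no verification that the potential $e-n$ behaves correctly under it are given. That missing step is precisely the open content of the conjecture, and nothing in the paper supplies it either; the paper's actual technique (Gramians of edge-vertex incidence matrices plus Hadamard's inequality) deliberately gives away a factor and tops out at $6^{n/6}\approx 1.348^n$.

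Two further remarks. First, your strengthening $|\det(A)|\leq 2^{(e-n)/3}$ is itself an unproven conjecture at least as strong as the statement. It is consistent with the known examples --- it is tight for the diagonal-of-$C$ matrices, and for Fano-plane blocks at $e=3n$ it gives $4^{n/3}\approx 1.587^n$ against the construction's $24^{n/7}\approx 1.575^n$ --- and it is a natural induction-friendly reformulation, but substituting a stronger open statement for the original one is progress only if the induction closes, which it does not. Second, the rigorous portion of your argument essentially retraces the paper's own evidence: the proof of Proposition~\ref{proposition2perrow} likewise expands along rows or columns with at most one nonzero entry and then splits off odd cycles $C_{\ell}$ with $\ell\geq 3$, each contributing a factor $2\leq 2^{\ell/3}$. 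Your bipartite-graph and circulant formulation is a clean equivalent of that cycle analysis, but it covers no ground beyond it. Verdict: the proposal correctly verifies the conjecture on the degree-restricted slice but does not prove the statement, which remains open.
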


We discuss the conjecture a bit more in the next section, and 
we prove Theorem~\ref{2nboundthm} in Sections~\ref{gramsec} and~\ref{sparsesec}. 
As a tool in the proof we consider incidence matrices $I$ 
of graphs. Somewhat strikingly, we find that  the determinant $\det(I\trsp I)$
of the Gramian of the incidence
matrix of a tree does not depend on the structure of the tree (Theorem~\ref{theoremgramtree}).

\medskip
We also consider two other types of combinatorial matrices. Many $0/1$-matrices arise
in combinatorial optimisation as constraints matrices of (mixed) integer programs. 
A particularly nice type of such a matrix are those with the \emph{consecutive ones property}.
These are $0/1$-matrices, in which in each row all the one entries are consecutive. 
It is a standard exercise to show that matrices with the consecutive ones property are 
totally unimodular, which means that  solving the linear relaxation also solves the 
corresponding integer program. 

We consider two types of generalisations of the consecutive ones property. In Section~\ref{consec},
we treat matrices with the \emph{$k$-consecutive ones property}: matrices in whose rows the 
$1$-entries appear in at most $k$ blocks. We will prove an upper bound on the determinant of such 
matrices (Theorem~\ref{theoremkcop}).
Perhaps not immediately recognisable as a generalisation 
%of matrices with the consecutive ones property 
are  path-edge incidence matrices 
of a tree. We treat these in Section~\ref{pathsec} and similarly prove an upper bound on the determinant (Theorem~\ref{twothm}). 
Finally, in Section~\ref{sec_lr}, we use the bound  
of Theorem~\ref{twothm}
in order to upper-bound a somewhat complicated graph parameter, 
the \emph{leaf rank} of a graph. Our result is the first that bounds the 
leaf rank in terms of the order of the graph.

%\medskip
%We close the introduction with a little bit of lit. DO WE?

\section{Discussion}

Ryser's inequality in Theorem~\ref{rythm} is tight in many circumstances. 
Ryser proved that equality holds if and only if  $A$ is the incidence matrix of an {\it $(n,k,\lambda)$-configuration},
that is, when $A$ is the incidence matrix of  a collection of $n$ subsets $S_1,\ldots,S_n$ of $[n]$ such that 
$|S_i|=k$ and $|S_i\cap S_j|= \lambda$ for every $i\in [n]$ and every $j\in [n]\setminus \{ i\}$.
(We write $[n]$ as a shorthand for $\{1,\ldots, n\}$.)

As a consequence, equality in Theorem~\ref{rythm} may only hold if $\lambda$ is an integer, which implies $\lambda\geq 1$.  
That, in turn, entails $k\geq \sqrt n$. To sum up, Ryser's theorem can only be tight if the matrix $A$
contains at least $n\sqrt n$ many ones. This explains why for only $2n$ ones, or indeed any linear number of ones, 
improvements may be possible.

There is another well-known bound on the determinant of a $0/1$-matrix that comes from a
very elegant transformation that turns any matrix $A\in\{0,1\}^{n \times n}$ into 
a matrix $B\in\{-1,1\}^{(n+1)\times (n+1)}$ such that $2^n\det A=\det B$;   see~\cite{bc}.
In particular, if $A$ is any $0/1$-matrix (with possibly more than $2n$ ones), then 
direct application of Hadamard's inequality~\eqref{eh} to $A$
yields $|\det A|\leq n^\frac{n}{2}$, while an application of the inequality to $B$ yields
the better bound of $|\det(A)|\leq 2^{-n}(n+1)^{(n+1)/2}$. 
In our setting, however, where $A$ contains at most $2n$ ones, the latter bound is actually worse
than what we can get from~\eqref{eh} by exploiting that, on average, every row of $A$ has Euclidean norm at most~$\sqrt 2$.

\medskip
Conjectures need evidence. As a first piece of evidence for Conjecture~\ref{conj2nentries},
we quickly prove a special case, where, more strongly, we require that every row 
of the matrix contains at most two ones:

\begin{proposition}\label{proposition2perrow}
If $A\in \{ 0,1\}^{n\times n}$ and if every row of $A$ contains at most two ones, then $|\det(A)|\leq 2^{n/3}.$
\end{proposition}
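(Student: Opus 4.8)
The plan is to reduce the problem to a statement about incidence matrices of graphs and then analyse those combinatorially, by strong induction on $n$. First I would dispose of rows with fewer than two ones. If some row is entirely zero then $\det(A)=0$ and there is nothing to prove; if some row has exactly one $1$, say in column $j$, I would expand the determinant along that row. This expresses $|\det(A)|$ as the absolute value of an $(n-1)\times(n-1)$ minor, and deleting column $j$ can only decrease the number of ones in each remaining row, so the minor again has at most two ones per row. Induction then gives $|\det(A)|\le 2^{(n-1)/3}\le 2^{n/3}$, and the whole problem reduces to the case in which \emph{every} row of $A$ contains exactly two ones (the base cases $n\le 2$ being immediate).

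In that remaining case I would read $A$ as the unsigned edge--vertex incidence matrix of a graph $G$ on $n$ vertices (the columns) with $n$ edges (the rows), each row recording the two endpoints of an edge. After permuting rows and columns, $A$ becomes block-diagonal, with one block $B_i$ of size $e_i\times v_i$ for each connected component, where $e_i$ and $v_i$ count its edges and vertices and $\sum_i e_i=\sum_i v_i=n$. The key observation is that $\det(A)\ne 0$ forces every block to be square: a block with $e_i>v_i$ has its $e_i$ rows confined to a $v_i$-dimensional space and so is singular, and if one block is too wide then another must be too tall. Hence a nonzero determinant requires each component to be connected with $v_i=e_i$, that is, \emph{unicyclic}.

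It then remains to evaluate $|\det(B_i)|$ for a connected unicyclic graph, and I expect this to be the main technical point, since it is here that the parity of the unique cycle enters. The cleanest route is to expand repeatedly along the column of a leaf vertex: a pendant edge contributes a single $1$ to that column, so the expansion removes the leaf and its edge while preserving $|\det|$, eventually reducing $B_i$ to the incidence matrix of the unique cycle $C_k$. With the natural cyclic ordering this incidence matrix is the circulant $\mathrm{Id}+P$, where $P$ is the cyclic shift, whose eigenvalues are $1+\omega$ over the $k$-th roots of unity $\omega$; hence $\det(B_i)=\prod_\omega(1+\omega)=1-(-1)^k$, which equals $2$ when $k$ is odd and $0$ when $k$ is even. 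So a component contributes a factor of $2$ when its cycle is odd and makes $\det(A)$ vanish when its cycle is even.

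Putting these together, a nonzero determinant requires every component to be unicyclic with an odd cycle, in which case $|\det(A)|=\prod_i|\det(B_i)|=2^m$, where $m$ is the number of components. Since an odd cycle has length at least $3$, each component has at least three vertices, so $m\le n/3$ and $|\det(A)|\le 2^{n/3}$, as claimed; the example $\mathrm{diag}(C,\dots,C)$ with the triangle matrix $C$ shows the bound is attained. The step needing genuine care is the incidence-matrix evaluation, because it is exactly the bipartite/non-bipartite dichotomy (equivalently, the parity of the cycle) that decides between the multiplicative factor $2$ and a singular block.
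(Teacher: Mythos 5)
Your proposal is correct and follows essentially the same route as the paper: both read $A$ as the edge--vertex incidence matrix of a graph, dispose of rows or columns with at most one $1$ by cofactor expansion, and rest on the dichotomy that a cycle's incidence matrix has determinant $0$ (even length) or $\pm 2$ (odd length), so that every surviving component costs at least three vertices per factor of $2$. The only difference is organizational: the paper inducts, splitting off one cycle component at a time after forcing columns as well as rows to have exactly two ones, whereas you decompose globally into unicyclic components and evaluate each block exactly via leaf-peeling and the circulant $\mathrm{Id}+P$.
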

\begin{proof}
Clearly, we may assume that $A$ is non-singular.
The proof is by induction on~$n$. 
For $n\leq 3$, the bound is easily verified.
Now, let $n>4$.
If some row or column of $A$ contains at most one non-zero entry, 
then expanding the determinant of $A$ along this row or column,
we obtain, by induction, $|\det(A)|\leq 2^{(n-1)/3}$.
Hence, every row and column of $A$ contains exactly two non-zero entries.
Since $A$ is non-singular, this implies that, suitably permuting the rows and columns of $A$, we may assume that 
$$A=
\begin{pmatrix}
A_1 & 0\\
0 & A_2
\end{pmatrix},
$$
where $A_1$ is the vertex-edge incidence matrix of a cycle $C_\ell$.
If $\ell$ is even, then $\det(A_1)=0$, and, if $\ell$ is odd, then $|\det(A_1)|=2$.
Since $A$ is non-singular, we obtain that $\ell$ is an odd integer at least $3$.
By induction, we obtain
$$|\det(A)|=|\det(A_1)|\cdot |\det(A_2)|\leq 2\cdot 2^{(n-\ell)/3}\leq 2^{n/3},$$
which completes the proof. 
\end{proof}

Our main result is to improve on Ryser's theorem for matrices that contain up to~$2n$ ones.
But what about matrices with more ones? What about matrices with up to $3n$
ones? While we do not know how to prove a small bound on the determinant, we at least can offer
a lower bound that arises in a similar way as for $2n$ ones. For this, denote by $C\in\{0,1\}^{7\times 7}$ 
the incidence matrix of the Fano plane, or a $(7,3,1)$-configuration in Ryser's terminology. 
Then $\det C=3\cdot 2^\frac{7-1}{2}=24$, by Ryser's theorem (or elementary calculation). Thus 
the $n\times n$-matrix 
$A=\text{diag}(C,\ldots,C)$ has determinant $24^\frac{n}{7}\approx 1.57^n$. We suspect that 
this is the maximal determinant for $3n$ ones.

\section{Gramians of graphs}\label{gramsec}

In this section we prove the main lemma for the proof of Theorem~\ref{2nboundthm}:

\begin{lemma}\label{2nlem}
Let $I\in\{0,1\}^{m\times n}$ be a matrix that contains exactly two ones in each row. 
Then 
$$
\det (I\trsp I)\leq
\begin{cases}
2^m &\mbox{if $m\leq \frac{n}{2}$ and}\\
2^{\frac{1}{3}(n+m)} &\mbox{if $m\geq \frac{n}{2}$.}
\end{cases}
$$
\end{lemma}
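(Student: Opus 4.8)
The plan is to read $I$ as the edge--vertex incidence matrix of a (multi)graph $G$: each of the $m$ rows is an edge, carrying ones in the two columns of its endpoints, and the $n$ columns are vertices (columns that are entirely zero correspond to isolated vertices and may be discarded, as they do not affect $I\trsp{I}$). Since edges in different connected components of $G$ share no endpoint, the entries of the Gramian $I\trsp{I}$ linking distinct components vanish, so after reordering rows $I\trsp{I}$ is block diagonal with one block $I_C\trsp{(I_C)}$ per component $C$. Hence $\det(I\trsp{I})=\prod_C\det\!\big(I_C\trsp{(I_C)}\big)$, and it suffices to understand each factor in terms of the numbers $c_C$ of vertices and $f_C$ of edges of $C$.

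I would evaluate each factor by Cauchy--Binet, writing $\det\!\big(I_C\trsp{(I_C)}\big)=\sum_S\det\!\big(I_C[\cdot,S]\big)^2$ over $f_C$-element column sets $S$. If $f_C>c_C$, then $I_C$ has more rows than columns, every term vanishes, and in fact $\det(I\trsp{I})=0$, so the bound holds trivially. If $C$ is a tree ($f_C=c_C-1$), each $S$ omits one vertex and the resulting reduced incidence matrix has determinant $\pm1$ --- cleanest via the observation that a tree is bipartite, so flipping the signs of one colour class turns the unsigned reduced incidence into a signed one whose determinant counts the unique spanning tree; thus the factor equals $c_C$. If $C$ is unicyclic ($f_C=c_C$), then $I_C$ is square and, reducing along pendant edges exactly as in the proof of Proposition~\ref{proposition2perrow}, $|\det(I_C)|=2$ when the cycle is odd and $0$ when it is even, so the factor is $4$ or $0$. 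Consequently a nonzero determinant forces every component to be either a tree, contributing a factor $c_C$, or an odd unicyclic graph, contributing a factor $4$.

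For the first bound I would note that each factor is at most $2^{f_C}$: for a tree this is $c_C\le 2^{c_C-1}=2^{f_C}$, and for an odd unicyclic component it is $4\le 2^{f_C}$ since $f_C=c_C\ge3$. Multiplying over components gives $\det(I\trsp{I})\le 2^{\sum_C f_C}=2^m$, which holds for every $m$ and in particular settles the case $m\le\frac n2$.

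The case $m\ge\frac n2$ needs the vertex budget. Writing $V=\sum_C c_C\le n$ for the number of vertices actually used, and separating the $p$ tree components (sizes $c_i$) from the $q$ odd unicyclic ones (sizes $d_j\ge3$), a short count gives $V+m=2\sum_i c_i-p+2\sum_j d_j$. I would then bound the determinant componentwise against $2^{(V+m)/3}$ using the two inequalities $\log_2 c_i\le\frac13(2c_i-1)$ for each tree and $4\le 2^{2d_j/3}$ (that is, $d_j\ge3$) for each odd unicyclic component; summing them yields $\det(I\trsp{I})\le 2^{(V+m)/3}\le 2^{(n+m)/3}$. The delicate point is the first inequality: as a real inequality $\log_2 x\le\frac13(2x-1)$ actually fails slightly near $x\approx2.16$, so it is essential that $c_i$ is an integer at least $2$, where it holds with equality at $c_i=2$ and with room to spare for $c_i\ge3$. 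That the two regimes are governed by different extremal graphs --- a perfect matching of single edges when $m\le\frac n2$ and a disjoint union of triangles when $m\ge\frac n2$ --- is reassuring and matches the tightness of both cases.
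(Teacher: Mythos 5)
Your proof is correct, and its skeleton coincides with the paper's: read $I$ as the edge--vertex incidence matrix of a graph, factor $\det(I\trsp I)$ over connected components, and use that a tree component on $c$ vertices contributes a factor $c$, an odd unicyclic component contributes $4$, and anything else kills the determinant. Within that skeleton you deviate in two genuine ways. First, you obtain the tree value $c_C$ via Cauchy--Binet plus the bipartite sign-flip (every vertex-deleted unsigned incidence matrix of a tree has determinant $\pm 1$), whereas the paper proves this fact (Theorem~\ref{theoremgramtree}) by an induction that relocates leaves until the tree becomes a star. Second, and more substantially, the paper finishes with an extremal argument: a maximizer with fewest edges has only $K_2$, $P_3$ and $K_3$ components, which reduces everything to a three-variable linear count in which the only non-trivial numerical fact is $\log_2 3 \le \tfrac53$. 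You instead bound every component in place, using $\log_2 c_i \le \tfrac13(2c_i-1)$ for integers $c_i\ge 2$ together with $d_j\ge 3$, and sum. Your route is shorter and avoids the replacement step, at the price of the integrality subtlety you correctly flag: $\log_2 x \le \tfrac13(2x-1)$ fails for real $x$ slightly above $2$ (the difference $\tfrac13(2x-1)-\log_2 x$ has its minimum at $x=\tfrac{3}{2\ln 2}\approx 2.16$, where it is negative), so one must check the integers $c_i=2$ (equality) and $c_i=3$ (again $\log_2 3\le\tfrac53$) before monotonicity takes over; the paper's normalization to components with at most three edges sidesteps exactly this point, since there only $c\in\{2,3\}$ ever occurs. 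Both arguments are tight on the same configurations (perfect matchings, respectively unions of triangles and edges), so nothing is lost either way.
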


We recall  some basic properties about the \emph{Gramian} $I\trsp I$
that hold for any   matrix $I\in\mathbb{R}^{m\times n}$.
These properties follow quite easily from properties of determinants:
If $J$ arises from $I$ by
\begin{itemize}
\item[(G$_{\rm 1}$)] either multiplying some row with $-1$, 
\item[(G$_{\rm 2}$)] or adding a multiple of some row to another row,
\item[(G$_{\rm 3}$)] or permuting rows and columns,
\end{itemize}
then $\det (J\trsp J)=\det (I\trsp I)$.
Consider, for instance, the situation that $J$ arises from $I$ 
by adding $\alpha I_{j,\LargerCdot}$ to $I_{i,\LargerCdot}$ for distinct indices $i$ and $j$, and some real $\alpha$. If $A=I\trsp I$, and $B$ arises from $A$ 
by adding $\alpha A_{j,\LargerCdot}$ to $A_{i,\LargerCdot}$,
then $J\trsp J$ arises from $B$ 
by adding $\alpha B_{\LargerCdot,j}$ to $B_{\LargerCdot,i}$.
Since both operations do not change the determinant, (G$_{\rm 2}$) follows.

\medskip
If $I\in\{0,1\}^{m\times n}$ is a $0/1$-matrix with exactly two ones in each row, so if $I$ is as in Lemma~\ref{2nlem},
then in particular $I$ may be seen as the \emph{edge-vertex incidence} matrix of a graph $G$
with vertex set $\{v_1,\ldots,v_n\}$ and edge set $\{e_1,\ldots,e_m\}$ such that $v_i$ is incident with $e_j$
precisely when $I_{j,i}=1$. 
We find this view quite convenient as many of the arguments and statements 
we will need are easier to phrase in the language of graphs. 
For all basic graph-theoretical definitions and concepts we refer to the textbook of Diestel~\cite{Rhd}. 
We note that all our graphs will be finite, simple and undirected. 

There is a long interest in the algebraic properties of various matrices associated with graphs. 
For instance, 
Mowshowitz~\cite{Mow} showed that the characteristic polynomial of the adjacency matrix of a tree
can be expressed in terms of matchings. The matrix $I\trsp I$, with $I$ being the incidence matrix of the graph $G$,
is closely related to the \emph{Laplacian} of a graph. Indeed, if $Q$ is the incidence matrix of an \emph{orientation} of $G$,
then the Laplacian of $G$ is defined as $L=\trsp QQ$. It is not hard to verify that $L$ does not depend on 
the actual orientation. We refer to the book of Chung~\cite{chung} for more on the Laplacian
and other topics in spectral graph theory. 
While the Laplacian has rows and columns that correspond to the vertices of $G$, the 
rows and columns of the matrix $I\trsp I$ correspond to the edges of $G$. Thus, the matrix $I\trsp I$
that we are interested in, is even more closely related to the \emph{edge-Laplacian}, defined as $Q\trsp Q$,
of the graph. For some results about the edge-Laplacian see Bapat~\cite{Bap}.

Often  the determinant of a matrix associated with a graph $G$
 encodes properties of $G$. In contrast, 
we find that, for a tree, the determinant $\det (I\trsp I)$ does not depend on the structure of $T$ at all. 

\begin{theorem}\label{theoremgramtree}
If $T$ is a tree of order $n\geq 2$, and $I\in \{ 0,1\}^{E(T)\times V(T)}$ 
is the edge-vertex incidence matrix of $T$, then $\det (I\trsp I)=n$. 
\end{theorem}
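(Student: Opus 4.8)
The plan is to use the Cauchy--Binet formula. Since $T$ is a tree on $n$ vertices it has exactly $n-1$ edges, so $I$ has size $(n-1)\times n$, the product $I\trsp I$ is an $(n-1)\times(n-1)$ matrix, and Cauchy--Binet applies. It expresses $\det(I\trsp I)$ as a sum, over all ways of selecting $n-1$ of the $n$ columns of $I$, of $\det(I_S)\det(\trsp{(I_S)})=\det(I_S)^2$, where $I_S$ denotes the square submatrix formed by the chosen columns $S$. Selecting $n-1$ columns is the same as deleting a single column, that is, choosing one vertex $v$ to omit; write $B_v$ for the $(n-1)\times(n-1)$ submatrix of $I$ obtained by deleting the column indexed by $v$. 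Thus $\det(I\trsp I)=\sum_{v\in V(T)}\det(B_v)^2$, and it suffices to show that $\det(B_v)=\pm1$ for every vertex $v$.

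To evaluate $\det(B_v)$ I would root $T$ at $v$. Each vertex $u\neq v$ is then joined to its parent by a unique edge $e_u$, and $u\mapsto e_u$ is a bijection between $V(T)\setminus\{v\}$ and $E(T)$. Index the rows of $B_v$ by the edges and the columns by the vertices $\neq v$, ordering both by increasing distance from the root (breaking ties compatibly, so that $e_u$ sits in the same position as $u$). Then the diagonal entry $(B_v)_{e_u,u}$ equals $1$ for every $u$, since $u$ is an endpoint of its parent edge $e_u$. The only other possible nonzero entry in row $e_u$ lies in the column of the parent $p(u)$, and it occurs only when $p(u)\neq v$; as $p(u)$ is strictly closer to the root than $u$, this entry sits strictly below the diagonal. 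Hence $B_v$ is lower triangular with all diagonal entries equal to $1$, so $\det(B_v)=1$. Summing over the $n$ choices of $v$ yields $\det(I\trsp I)=n$.

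The step requiring the most care is the triangularity claim, and it is exactly here that the acyclicity of $T$ is used: the parent relation is well defined and ``strictly closer to the root'' is a genuine partial order precisely because $T$ has no cycles, which guarantees the bijection $u\mapsto e_u$ and the nested ordering. For a general graph the corresponding square submatrices need not be triangular, and their determinants would pick up factors of $2$ from odd cycles (compare the cycle computation in the proof of Proposition~\ref{proposition2perrow}); this is why the structure-independent value $n$ is special to trees. A routine alternative would be induction on $n$ by deleting a leaf, but verifying how a pendant edge alters $I\trsp I$ is more delicate than the one-line triangular evaluation above, so I would favour the Cauchy--Binet route.
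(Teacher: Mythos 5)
Your proof is correct, but it takes a genuinely different route from the paper's. You apply Cauchy--Binet to write $\det(I\trsp I)=\sum_{v\in V(T)}\det(B_v)^2$, where $B_v$ deletes the column of $v$, and then show each $B_v$ is unit lower triangular after rooting $T$ at $v$ and ordering vertices and their parent edges compatibly --- essentially the matrix-tree-theorem argument adapted to the unsigned incidence matrix, which succeeds for trees precisely because the parent-edge map $u\mapsto e_u$ is a bijection and parents precede children in the ordering. The paper instead stays inside the Gramian-invariance framework (G$_{\rm 1}$)--(G$_{\rm 3}$) it sets up: it verifies the statement for stars by an explicit determinant computation, then shows that moving a leaf (replacing an edge $uv$ by $uw$) leaves $\det(I_s\trsp I_s)$ unchanged, where the auxiliary ``special'' sign functions $s$ are exactly what is needed so that the leaf move can be realized by the row operations (G$_{\rm 1}$) and (G$_{\rm 2}$); since every tree can be transformed into a star by such moves, the theorem follows. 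Your approach buys a combinatorial explanation of the value $n$ --- one unit contribution per vertex, each vertex-deleted square submatrix being unimodular --- and is shorter once Cauchy--Binet is granted; the paper's approach avoids Cauchy--Binet entirely and exercises only the elementary invariances it reuses elsewhere (e.g.\ in Lemma~\ref{2nlem}b), at the cost of the sign-function bookkeeping and the reduction-to-star argument.
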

\begin{proof} 
For a function $s:V(T)\to \{ -1,1\}$,
let $I_s$ arise from $I$ by multiplying the column with index $u$ by $s(u)$ for every vertex $u$ of $T$.
We say that $s$ is {\it special}, if $s(u)=1$ for every non-leaf $u$ of $T$.
Note that $I_s\trsp I_s=I\trsp I$ if $s$ is special.
We prove the following slightly more general statement:
{\it If $T$ is a tree of order $n$, $s:V(T)\to \{ -1,1\}$ is special, and $I$ is the edge-vertex incidence matrix of $T$,
then $\det (I_s\trsp I_s)=n$.}

First, if $T$ is a star, then 
$$\det (I_s\trsp I_s)
=
\det\begin{pmatrix}
2 & 1 & 1& \ldots &1 & 1\\
1 & 2 & 1 & \ldots & 1 & 1\\
1 & 1 & 2 & \ldots & 1& 1\\
\vdots & \vdots  & \vdots  & \ddots & \vdots  & \vdots \\
1& 1& 1& \ddots &2 & 1\\
1 & 1 & 1& \ldots &1 & 2
\end{pmatrix}
=
\det\begin{pmatrix}
2 & 1 & 1& \ldots &1 & 1\\
-1 & 1 & 0 & \ldots & 0 & 0\\
0 & -1 & 1 & \ldots & 0& 0\\
\vdots & \vdots  & \vdots  & \ddots & \vdots  & \vdots \\
0& 0& 0& \ddots &1 & 0\\
0 & 0 & 0& \ldots &-1 & 1
\end{pmatrix}
=n,$$
where the last equality follows easily by induction 
expanding the determinant along the final column.
(Alternatively, the result also follows very quickly with the help of Sylvester's determinant
identity.)

Next, we consider a simple operation moving a leaf within a tree.
More precisely,
let $T$ be such that $uvw$ is a path in $T$, where $u$ is a leaf, and $w$ is not a leaf.
Let $T'=T-uv+uw$, and let 
$$s':V(T')\to \{ -1,1\}:x\mapsto 
\begin{cases}
-s(x)&\mbox{, if $x=u$, and}\\
s(x)&\mbox{, if $x\not=u$.} 
\end{cases}$$
Clearly, since $s$ is special, the function $s'$ is special.
Furthermore, if $I'$ is the edge-vertex incidence matrix of $T'$, 
then $I'_{s'}$ arises from $I_s$ 
by subtracting the row with index $vw$ from the row with index $uv$,
and multiplying the new row with index $uv$ with $-1$,
which, by (G$_{\rm 1}$) and (G$_{\rm 2}$), implies 
$\det ({I'}_{s'}\trsp {I'}_{s'})=\det (I_s\trsp I_s)$.

Since every tree can be transformed to a star by a sequence of such simple operations,
the desired statement follows.
\end{proof}
\noindent Let $G$ be a connected graph of order $n$ and size $m$,
and let $I$ be the edge-vertex incidence matrix of $G$.
If $m>n$, then the rows of $I$ are linearly dependent, which, by (G$_{\rm 2}$),
easily implies $\det (I\trsp I)=0$.
If $m=n$, then $G$ arises by attaching (possibly trivial) trees to the vertices of some cycle $C$. 
Iteratively expanding the determinant of the square matrix $I$ 
along columns corresponding to vertices of degree $1$,
we obtain that $\det (I)=\det (J)$,
where $J$ is the edge-vertex incidence matrix of $C$.
It is a simple known fact that, 
if $C$ has even order, then $\det (J)=0$, and, 
if $C$ has odd order, then $|\det (J)|=2$.

Altogether, we obtain
\begin{eqnarray}\label{egram}
\det (I\trsp I)=
\begin{cases}
n&\mbox{if $G$ is a tree,}\\
0&\mbox{if $m>n$ or $G$ contains an even cycle, and}\\
4&\mbox{if $m=n$ and $G$ contains an odd cycle.}
\end{cases}
\end{eqnarray}

Our next result is Lemma~\ref{2nlem} phrased in terms of graphs.

\medskip

%\label{theoremgrammax}
\noindent {\bf Lemma~\ref{2nlem}b}
{\it If $G$ is a graph of order $n$ that has at least one and at most $m$ edges, 
and $I$ is the edge-vertex incidence matrix of $G$, then
$$
\det (I\trsp I)\leq
\begin{cases}
2^m &\mbox{if $m\leq \frac{n}{2}$ and}\\
2^{\frac{1}{3}(n+m)} &\mbox{if $m\geq \frac{n}{2}$.}
\end{cases}
$$}
\begin{proof}
Among all graphs with $n$ vertices and  at most $m$ edges (but at least one),
let $G$ be chosen in such a way that $\det (I\trsp I)$ is largest possible,
and, subject to this condition, the number of edges of $G$ is smallest possible.
Clearly $\det (I\trsp I)>0$.
Let $G$ have $k$ components $G_1,G_2,\ldots,G_k$
with edge-vertex incidence matrices $I_1,I_2,\ldots,I_k$.
Since, by (G$_{\rm 3}$), 
 permuting rows and columns does not affect $\det (I\trsp I)$,
we may assume that $I\trsp I$ is a block diagonal matrix
with $k$ blocks each corresponding to one component. Then
$$\det (I\trsp I)=\det (I_1\trsp I_1)\cdot \det (I_2\trsp I_2)\cdots\det (I_k\trsp I_k).$$

We claim that
\begin{enumerate}[\rm (i)]
\item every non-tree component of $G$ is a triangle, and
\item every tree component of $G$ is either an isolated vertex, or an edge, or a path
on three vertices. 
\end{enumerate}
Indeed, let $J$ be the edge-vertex incidence matrix of a triangle.
Since, again by~\eqref{egram}, 
$\det (I_i\trsp I_i)$ equals $\det (J\trsp J)$ 
if $G_i$ is a non-tree component, 
and since $G$ has as few edges as possible,
we obtain~(i). Suppose that $G$ has a tree component of at least $r\geq 4$ vertices. Then 
replacing the component by two components, one  a tree on $r-2$ vertices and 
the other an edge, results, by~\eqref{egram}, in an overall larger determinant. This proves~(ii).

For $i\in [3]$, let $k_i$ be the number of components of $G$ with exactly $i$ edges.
It follows that 
\begin{eqnarray}\label{emstar}
\det (I\trsp I) & =& 2^{k_1}\cdot 3^{k_2}\cdot 4^{k_3}
= 2^{k_1+\log_2(3)k_2+2k_3},
\end{eqnarray}
and that the non-negative integers $k_1$, $k_2$, and $k_3$ satisfy 
\begin{eqnarray}
k_1+2k_2+3k_3 & \leq & m\label{elin1}\mbox{ and}\\
2k_1+3k_2+3k_3 & \leq & n\label{elin2}.
\end{eqnarray}
Since $\log_2(3)\leq \frac{5}{3}\leq 2$, we obtain
\begin{align*}
\log_2\left(\det (I\trsp I)\right)&\stackrel{(\ref{emstar})}{\leq} k_1+2k_2+3k_3 \stackrel{(\ref{elin1})}{\leq} m,\mbox{ and }\\[3mm]
\log_2\left(\det (I\trsp I)\right)&\stackrel{(\ref{emstar})}{\leq}
k_1+\frac{5}{3}k_2+2k_3\\
&= \frac{1}{3}\left(k_1+2k_2+3k_3\right)+\frac{1}{3}\left(2k_1+3k_2+3k_3\right)
\stackrel{(\ref{elin1}),(\ref{elin2})}{\leq} \frac{m+n}{3},
\end{align*}
which completes the proof.
\end{proof}

Lemma~\ref{2nlem}b is tight.
In fact, 
if $m\leq \frac{n}{2}$, then $G=mK_2\cup (n-2m)K_1$ yields equality, and, 
if $\frac{n}{2}\leq m\leq n$ and $2m-n$ is divisible by $3$, 
then $G=(n-m)K_2\cup \frac{1}{3}(2m-n)K_3$ yields equality.

\section{Proof of Theorem~\ref{2nboundthm}}\label{sparsesec}

For the  proof of Theorem~\ref{2nboundthm}, we need another property of the Gramian.
Let $B={A\choose a}$ be a real matrix with last row $a$. 
If $a'$ is the orthogonal projection of $a$ onto the row space of $A$, 
then, by (G$_{\rm 2}$), 
\begin{eqnarray}
\det (B\trsp B)&=&\det \left({{A\choose a-a'}}\trsp {{A\choose a-a'}}\right)
=\det 
\begin{pmatrix}
A\trsp A & 0 \nonumber\\
0 & ||a-a'||_2^2
\end{pmatrix}\\
&=&\det (A\trsp A)\cdot ||a-a'||_2^2\nonumber \\
&\leq &\det (A\trsp A)\cdot ||a||_2^2.\label{eg4}
\end{eqnarray}

\begin{proof}[Proof of Theorem~\ref{2nboundthm}]
Clearly, we may assume that $A$ is non-singular; 
in particular, every row of $A$ contains at least one $1$-entry.

Iteratively, expand the determinant $\det A$ over rows with only one $1$-entry, and 
let $n_1$ be the number of rows eliminated in this way, and let $A'$ be the 
resulting $(n-n_1)\times (n-n_1)$-matrix with $|\det(A)|=|\det(A')|$. 
In particular, $A'$ contains at most $2n-n_1$ many ones, and every row of $A'$ contains at least two ones. 
Denote by $A_2$ the matrix consisting of the rows of $A'$ with exactly two ones, 
and let $A_3$ be the matrix consisting of the other rows (each with at least three ones).
Let $A_i$ have $n_i$ rows for $i=2,3$.
Since $n_1+2n_2+3n_3\leq 2n=2n_1+2n_2+2n_3$, we obtain $n_3\geq n_1$. 

Iteratively applying (\ref{eg4}), we obtain 
\begin{eqnarray*}
\det (A)^2&\leq& 
\det \left(A_2\trsp A_2\right)\cdot \prod_{i=1}^{n_3}||A_{3,\LargerCdot}||^2_2.
\end{eqnarray*}

Noting that $A_2$ has $n-n_1$ many columns and $n_2$ many rows, 
we apply Lemma~\ref{2nlem} to get 
$$\det (A_2\trsp A_2)\leq 
\begin{cases}
2^{n_2} &\mbox{if $n_2\leq \frac{n-n_1}{2}$ and}\\
2^{\frac{1}{3}((n-n_1)+n_2)} &\mbox{if $n_2\geq \frac{n-n_1}{2}$.}
\end{cases}
$$
Since $A_3$ is a $0/1$-matrix with at most $2n-n_1-2n_2=n_1+2n_3$ non-zero entries, 
the inequality between the geometric and the arithmetic mean implies
\begin{align}\label{A3est}
\prod_{i=1}^{n_3}||A_{3,\LargerCdot}||^2_2 & \leq  \left(\frac{1}{n_3}\sum_{i=1}^{n_3}||A_{3,\LargerCdot}||^2_2\right)^{n_3}
\leq \left(\frac{2n-n_1-2n_2}{n_3}\right)^{n_3}
=\left(2+\frac{n_1}{n_3}\right)^{n_3}.
\end{align}
Using $n=n_1+n_2+n_3$,  
we obtain
\begin{eqnarray*}
\det (A)^2& \leq & 
\begin{cases}
2^{n-n_1-n_3}\cdot \left(2+\frac{n_1}{n_3}\right)^{n_3} &\mbox{if $n_1+2n_3\geq n$, and}\\[3mm]
2^{\frac{1}{3}(2n-2n_1-n_3)}\cdot \left(2+\frac{n_1}{n_3}\right)^{n_3} &\mbox{if $n_1+2n_3\leq n$.}
\end{cases}
\end{eqnarray*}
Setting $x=\frac{n_1}{n}$ and $y=\frac{n_3}{n}$,
we rewrite the right-hand side as $2^{f(x,y)n}$, where
\begin{eqnarray*}
f(x,y) &=&
\begin{cases}
1-x-y+\frac{1}{\ln(2)}\cdot y\cdot \ln\left(2+\frac{x}{y}\right)
&\mbox{if $x+2y\geq 1$, and}\\[3mm]
\frac{1}{3}(2-2x-y)+\frac{1}{\ln(2)}\cdot y\cdot \ln\left(2+\frac{x}{y}\right)
&\mbox{if $x+2y\leq 1$.}
\end{cases}
\end{eqnarray*}
Since $n_3\geq n_1$, we have $0\leq y\leq x\leq 1$.

Let 
\begin{align*}
M_1 & =  \max\left\{ f(x,y):0\leq y\leq x\leq 1\mbox{ and }x+2y\geq 1\right\}\mbox{ and}\\
M_2 & =  \max\left\{ f(x,y):0\leq y\leq x\leq 1\mbox{ and }x+2y\leq 1\right\}.
\end{align*}
Taylor expansion around $z=1$ implies $\ln(2+z)\leq \ln(3)+\frac{z-1}{3}$ for $z\geq 0$, and, hence,
\begin{align*}
M_1 &\leq  
\max\Big\{
\underbrace{1-x-y+\frac{y}{\ln(2)}\left(\ln(3)+\frac{\tfrac{x}{y}-1}{3}\right)}_{=:g_1(x,y)}
:0\leq y\leq x\leq 1,\,x+2y\geq 1\Big\}\\
M_2 &\leq  
\max\Big\{
\underbrace{\tfrac{1}{3}(2-2x-y)+\frac{y}{\ln(2)}\left(\ln(3)+\frac{\tfrac{x}{y}-1}{3}\right)}_{=:g_2(x,y)}
:0\leq y\leq x\leq 1,\,x+2y\leq 1\Big\}.
\end{align*}
Since $\frac{\partial}{\partial x}g_1(x,y)$ is a negative constant 
and $\frac{\partial}{\partial y}g_2(x,y)$ is a positive constant, we obtain
\begin{align*}
M_1 &\leq  \max\left\{g_1\Big(\max\Big\{ y,1-2y\Big\},y\Big): 0\leq y\leq 1\right\}\mbox{, and}\\
M_2 &\leq  
\max\left\{g_2\left(x,\min\left\{ x,\frac{1-x}{2}\right\}\right):0\leq x\leq 1\right\}.
\end{align*}
Since 
$\frac{\partial}{\partial y}g_1(y,y)$ is a negative constant and
$\frac{\partial}{\partial y}g_1(1-2y,y)$ is a positive constant,
we obtain $M_1\leq g_1(1/3,1/3)=\frac{1}{3}+\frac{\ln(3)}{3\ln(2)}$.
Since $\frac{\partial}{\partial x}g_2(x,x)$ is a positive constant and
$\frac{\partial}{\partial x}g_2\left(x,\frac{1-x}{2}\right)$ is a negative constant,
we obtain $M_2\leq g_2(1/3,1/3)=\frac{1}{3}+\frac{\ln(3)}{3\ln(2)}$.
Altogether, we obtain
$|\det (A)|\leq 2^{ \left(\frac{1}{3}+\frac{\ln(3)}{3\ln(2)}\right)\frac{n}{2}}=2^{n/6}\cdot 3^{n/6}\approx 1.348^n$.
\end{proof}

Obviously, we do not believe the bound in the theorem to be best possible, and indeed
there are some ways to improve the bound further. Since we can assume that every column
with a~$1$ in some row of~$A_2$ or of $A_3$ contains at least two ones (otherwise we may expand over the column),
we see that for every row $a$ in $A_2$ or in $A_3$ there is some other row $b$ with scalar product $a\trsp b\geq 1$.
Then, however, one of the two rows $a,b$ can be replaced by a shorter row. In this way, 
it is possible to shorten at least half of the rows in $A_3$ somewhat, which will result in 
a tighter estimation in~\eqref{A3est}. 
A similar approach is used in the proof of Theorem \ref{theorem2cop} below.
We have not worked out the details here, 
because it is clear that this effect will not be enough 
to achieve the bound in Conjecture~\ref{conj2nentries}.

\section{The $k$-consecutive ones property}\label{consec}

A $0/1$-matrix $A$ has the \emph{consecutive ones property} if, in each row of $A$, the 
$1$-entries are consecutive. That is, in each row the ones form a block. 
We generalise this property by allowing more blocks of ones. 
For a positive integer $k$,
let us say 
 that $A$ has the \emph{$k$-consecutive ones property}
if in every row of $A$ the $1$-entries form at most $k$ blocks. 
That means every row has the form
$$0\ldots 0\underbrace{1\ldots 1}_{\mbox{1st block}}
0\ldots 0\underbrace{1\ldots 1}_{\mbox{2nd block}}
0\ldots 0 \ldots 0\ldots 0 
\underbrace{1\ldots 1}_{\mbox{$\ell$th block}}
0\ldots 0$$
for some $\ell\leq k$.
It is well known that matrices with the {consecutive ones property}  are totally unimodular:
 every square submatrix has determinant $-1$, $0$, or $1$.
While the determinant of a matrix with the
$k$-consecutive ones property is no longer bounded by any constant, 
we will still find a bound that is substantially better than Hadamard's bound~\eqref{eh}.

We begin with a simple consequence of Hadamard's inequality:

\begin{proposition}\label{proposition01-1}
If $A\in \{ -1,0,1\}^{n\times n}$ has $t$ non-zero entries, then 
\begin{eqnarray}\label{eprop}
|\det(A)|\leq \left(\frac{t}{n}\right)^{n/2}
\end{eqnarray}
with equality if 
$\frac{t}{n}$ and $\frac{n^2}{t}$ are integers, 
and there is a Hadamard matrix of order $\frac{t}{n}$.
\end{proposition}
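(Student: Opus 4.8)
The plan is to derive the inequality directly from Hadamard's inequality~\eqref{eh} combined with the inequality between the arithmetic and geometric means, and then to exhibit an explicit matrix attaining equality under the stated divisibility and existence hypotheses.

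For the inequality, I would first observe that, since every entry of $A$ lies in $\{-1,0,1\}$, the squared Euclidean norm $\|A_{i,\LargerCdot}\|_2^2$ of the $i$th row equals exactly the number $t_i$ of non-zero entries in that row; in particular $\sum_{i=1}^n t_i = t$. Hence~\eqref{eh} gives $|\det(A)| \leq \prod_{i=1}^n \sqrt{t_i}$. Applying the geometric–arithmetic mean inequality to $t_1,\ldots,t_n$ yields $\prod_{i=1}^n t_i \leq \left(\tfrac{1}{n}\sum_{i=1}^n t_i\right)^n = (t/n)^n$, and taking square roots produces the claimed bound $|\det(A)| \leq (t/n)^{n/2}$.

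For the equality case, set $k = t/n$ and $m = n^2/t = n/k$; by hypothesis both are positive integers and $n = mk$. Let $H \in \{-1,1\}^{k\times k}$ be a Hadamard matrix of order $k$, so that $|\det(H)| = k^{k/2}$, and take the block-diagonal matrix $A = \mathrm{diag}(H,\ldots,H)$ with $m$ copies of $H$. Then $A$ is an $n\times n$ matrix with entries in $\{-1,0,1\}$; it has exactly $mk^2 = nk = t$ non-zero entries, and $|\det(A)| = |\det(H)|^m = k^{km/2} = k^{n/2} = (t/n)^{n/2}$, so equality holds.

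I expect no serious obstacle here: the inequality is an immediate two-step estimate, and the construction is a routine block-diagonal assembly. The only points requiring care are the bookkeeping that the three hypotheses ($t/n\in\mathbb{Z}$, $n^2/t\in\mathbb{Z}$, and existence of a Hadamard matrix of order $t/n$) are exactly what the construction consumes, and the observation that each row of $A$ has support of size precisely $k$ (making the geometric–arithmetic mean step tight) while rows in distinct blocks are automatically orthogonal (making Hadamard's inequality tight).
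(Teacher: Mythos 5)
Your proposal is correct and follows essentially the same route as the paper: Hadamard's inequality~\eqref{eh} combined with the arithmetic--geometric mean inequality for the upper bound, and a block-diagonal matrix built from $n^2/t$ copies of a Hadamard matrix of order $t/n$ for the equality case. The only cosmetic difference is that you work with row norms while the paper sums squared entries column-wise, which is immaterial since $\det(A)=\det(\trsp{A})$.
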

\begin{proof}
For $i\in [n]$, let $s_i$ be the sum of the squares of the entries in column $i$ of $A$.
Note that $\sum\limits_{i=1}^n s_i=t$.
By Hadamard's inequality (\ref{eh}), and, since the geometric mean is at most the arithmetic mean, we obtain
\begin{eqnarray*}
|\det(A)| & \leq & \prod\limits_{i=1}^n\sqrt{s_i}
\leq \left(\frac{1}{n}\sum\limits_{i=1}^n s_i\right)^{n/2}
= \left(\frac{t}{n}\right)^{n/2}.
\end{eqnarray*}
If $k=\frac{t}{n}$ and $\frac{n^2}{t}$ are integers, and $H$ is a Hadamard matrix of order $k$,
then the block diagonal matrix $A$ with $\frac{n^2}{t}$ copies of $H$ along the diagonal
satisfies
$$|\det(A)|=|\det(H)|^{n^2/t}
=\left(\left(\frac{t}{n}\right)^{t/2n}\right)^{n^2/t}
=\left(\frac{t}{n}\right)^{n/2},$$
which completes the proof.
\end{proof}

It is conceivable that equality holds in (\ref{eprop}) if and only if 
$\frac{t}{n}$ and $\frac{n^2}{t}$ are integers, and there is a Hadamard matrix of order $\frac{t}{n}$.
Note that, unlike  Ryser's theorem, which has a similar setting,  Proposition~\ref{proposition01-1} is tight for some $n\times n$ matrices with only $O(n)$ non-zero entries.

\begin{theorem}\label{theoremkcop}
If $A\in \{ 0,1\}^{n\times n}$ has the $k$-consecutive ones property, then 
$|\det(A)|\leq (2k)^{n/2}.$
Furthermore, for every $k$ such that a Hadamard matrix of order $k$ exists,
and every $n$ for which $n/k$ is an integer,
there is a matrix $A$ with $|\det(A)|=k^{(n-k)/2}$ that has the $k$-consecutive ones property.
\end{theorem}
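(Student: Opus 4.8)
The plan is to treat the two halves of the statement separately: the inequality $|\det(A)|\le(2k)^{n/2}$ follows from a clean determinant-preserving reduction, whereas the tightness construction is the genuinely hard part.

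For the upper bound I would first destroy the block structure with a single unimodular column operation. Let $M\in\{-1,0,1\}^{n\times n}$ be the matrix with $M_{j,j}=1$ and $M_{j+1,j}=-1$, so that $\det M=1$ and $B:=AM$ has columns $B_{\LargerCdot,j}=A_{\LargerCdot,j}-A_{\LargerCdot,j+1}$ for $j<n$ and $B_{\LargerCdot,n}=A_{\LargerCdot,n}$; in particular $|\det B|=|\det A|$. Now fix a row of $A$ and suppose its ones form $\ell\le k$ blocks. Taking consecutive differences, each block contributes a single $+1$ at its right end and a single $-1$ immediately to the left of its left end (the $-1$ being absent if the block starts in column~$1$), while all interior entries cancel. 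Hence every row of $B$ lies in $\{-1,0,1\}^n$ and has at most $2\ell\le 2k$ nonzero entries, so $\|B_{i,\LargerCdot}\|_2\le\sqrt{2k}$. Applying Hadamard's inequality~\eqref{eh} to $B$ then yields $|\det A|=|\det B|\le\prod_{i}\|B_{i,\LargerCdot}\|_2\le(2k)^{n/2}$; alternatively one feeds $B$, which has at most $2kn$ nonzero entries, into Proposition~\ref{proposition01-1}.

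For the tightness statement I would try to reverse this reduction. Writing $s=n/k$, the target value is $k^{(n-k)/2}=\big(k^{k/2}\big)^{\,s-1}=|\det H|^{\,s-1}$, where $H$ is the given Hadamard matrix of order $k$. The exponent $s-1$ (rather than $s$) is the main clue: it suggests a matrix of $s\times s$ blocks of size $k$ whose determinant factorises, via the characters of $\mathbb Z_s$, into one trivial factor equal to $\pm1$ together with $s-1$ factors each of absolute value $k^{k/2}=|\det H|$. Concretely, I would look for a $0/1$ block-circulant $A$ built from $H$ so that the column-difference transform above carries the $\pm1$ pattern of $H$ into the nontrivial Fourier blocks. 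Two constraints must be respected: that $A$ genuinely be a $0/1$ matrix (equivalently, that the suffix sums recovering $A$ from its difference matrix stay in $\{0,1\}$), and that every row use at most $k$ blocks. Note that the rows must really exploit several blocks: a matrix all of whose rows are single blocks has the consecutive ones property, hence is totally unimodular with determinant in $\{0,\pm1\}$, so one is forced to use long intervals arranged into exactly $k$ blocks.

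The hard part is reconciling these constraints with the exact value $k^{(n-k)/2}$. The naive block-circulant guess already degenerates for $s=2$: forcing a nontrivial block to realise $H$ as a difference of two $0/1$ blocks forces the trivial block $\sum_t A_t$ to become the all-ones matrix, which is singular and collapses the product to $0$. The real work is therefore to perturb the construction—adjusting the interval placement, or bordering the matrix with a few anchor rows/columns so that exactly $k$ of the $n$ directions contribute a unit factor and the remaining $n-k$ contribute a factor $\sqrt k$ each—while keeping all rows $0/1$ with at most $k$ blocks. I expect the cleanest certification of the final value to go either through the column-difference transform, recognising $\det A$ as a product involving $\det H$, or through a direct computation of $\det(A\trsp A)$ via the spectrum of the Gramian, giving $\det(A\trsp A)=k^{\,n-k}$ and hence $|\det A|=k^{(n-k)/2}$.
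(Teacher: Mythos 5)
Your upper-bound argument is correct and is essentially the paper's own proof: the unimodular column-differencing turns each row into a $\{-1,0,1\}$-vector with at most $2k$ nonzero entries, and Hadamard's inequality~\eqref{eh} (equivalently Proposition~\ref{proposition01-1}) then yields $(2k)^{n/2}$.

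The tightness half, however, has a genuine gap: you never exhibit the matrix. You give a heuristic (the exponent $p-1$, with $p=n/k$, suggesting a block-circulant whose determinant factorises over the characters of $\mathbb{Z}_p$), you show yourself that the naive version of this heuristic degenerates, and you defer ``the real work'' of repairing it --- but that repair is the entire content of the claim. Moreover, the circulant heuristic points in an unhelpful direction. In the paper the exponent $p-1$ has a far more pedestrian source: the construction is block upper bidiagonal, not circulant. One places $p-1$ copies of $H$ along the block diagonal, copies of a companion matrix $R\in\{-1,0,1\}^{k\times k}$ on the block superdiagonal, and the identity $\id_k$ as the final diagonal block; the identity block is what accounts for the ``missing'' factor (contributing $1$ instead of $k^{k/2}$), and since nothing wraps around, the singularity you ran into for $s=2$ never arises. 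The second idea you are missing is how to certify the $0/1$ and block-count constraints after undoing the differencing: the columns $R_{\LargerCdot,i}$ are chosen (each determined by $H_{\LargerCdot,i}$ and $H_{\LargerCdot,i+1}$, with $R_{\LargerCdot,k}=0$) so that, after a suitable interleaving permutation of the columns of the block matrix, every row has at most $2k$ nonzero entries whose signs alternate and whose first nonzero entry is $+1$. This sign-alternation is precisely the certificate that the cumulative column sums --- the inverse of your differencing transform --- stay in $\{0,1\}$ and produce at most $k$ blocks of ones per row, one block for each $+1$. All operations preserve $|\det|$, so the resulting $0/1$ matrix has determinant $k^{k(p-1)/2}=k^{(n-k)/2}$ and the $k$-consecutive ones property. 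Your proposal correctly identifies both constraints (recoverability as a $0/1$ matrix, and the bound on the number of blocks) but never produces a matrix that meets them, so the second assertion of the theorem remains unproven in your write-up.
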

\begin{proof}
Let $A\in \{ 0,1\}^{n\times n}$ have the $k$-consecutive ones property.
Let the matrix $B\in \{ -1,0,1\}^{n\times n}$ arise from $A$ by subtracting, for $i$ from $n-1$ down to $1$, column $i$ from column $i+1$.
Since $A$ has the $k$-consecutive ones property, every row of $B$ contains at most $2k$ non-zero entries
that alternate betwee $1$ and $-1$, 
and Proposition \ref{proposition01-1} implies $|\det(A)|=|\det(B)|\leq (2k)^{n/2}$.

Now, let $k$ and $n$ be such that a Hadamard matrix $H$ of order $k$ exists,
and $p=n/k$ is an integer.
Clearly, we may assume that $H_{\LargerCdot,1}$ is the all-$1$-vector.

There is a matrix $R\in \{ -1,0,1\}^{k\times k}$ such that $R_{\LargerCdot,k}$ is the all-$0$-vector, 
and in every row of the matrix 
$(H_{\LargerCdot,1}R_{\LargerCdot,1}H_{\LargerCdot,2}R_{\LargerCdot,2}\ldots H_{\LargerCdot,k}R_{\LargerCdot,k})$
\begin{enumerate}[\rm (i)]
\item there are at most $2k$ non-zero entries,
\item the first non-zero entry is $1$, and 
\item the $1$-entries and $-1$-entries alternate.
\end{enumerate}
In fact, each column $R_{\LargerCdot,i}$ of $R$ with $i\leq k-1$ is uniquely determined by the columns $H_{\LargerCdot,i}$ and $H_{\LargerCdot,i+1}$ of $H$.

Let $\id_k$ be the identity matrix of order $k$, and let the matrix $A$ of order $n=pk$ be as follows.
$$A=
\begin{pmatrix}
H & R & 0 & \cdots & 0& 0\\
0 & H & R & \cdots & 0& 0\\
\vdots & \vdots & \ddots & \ddots & \vdots & \vdots\\
0 & 0 & 0 & \ddots & R & 0\\
0 & 0 & 0 & \cdots & H & R\\
0 & 0 & 0 & \cdots & 0 & \id_k
\end{pmatrix}
$$
Clearly, 
$$|\det(A)|=|\det(H)|^{p-1}=k^{k(p-1)/2}=k^{(n-k)/2}.$$ 
Let $v_i=A_{\LargerCdot,i}$ for $i\in [n]$, and let $B$ be the matrix equal to 
$$%B=
\Big(
v_1v_{k+1}\ldots v_{(p-1)k+1}
v_2v_{k+2}\ldots v_{(p-1)k+2}
\ldots
v_{k-1}v_{k+(k-1)}\ldots v_{(p-1)k+(k-1)}
v_kv_{2k}\ldots v_{pk}\Big).$$
Since $B$ arises by suitably permuting the columns of $A$,
we obtain $|\det(B)|=|\det(A)|$, 
and the properties of $R$ imply that every row of $B$ satisfies~(i)--(iii).
Let $C$ arise from $B$ by adding, for $i$ from $1$ up to $n-1$, column $i$ to column $i+1$.
 
The matrix $C$ has the $k$-consecutive ones property, and satisfies
$|\det(C)|=k^{(n-k)/2}$.
\end{proof}

Unfortunately, there is a notable gap between the upper bound and the determinant 
of the matrix we construct. It is quite likely that that the upper bound can be improved
substantially. 
The following result concerning the $2$-consecutive ones property
illustrates possible ways of improving Theorem \ref{theoremkcop}.

\begin{theorem}\label{theorem2cop}
If $A\in \{ 0,1\}^{n\times n}$ has the $2$-consecutive ones property, then 
$|\det(A)|\leq 3.936^{n/2}.$
\end{theorem}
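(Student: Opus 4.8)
The plan is to start exactly as in the proof of Theorem~\ref{theoremkcop}: form $B\in\{-1,0,1\}^{n\times n}$ from $A$ by subtracting, for $i$ from $n-1$ down to $1$, column $i$ from column $i+1$, so that $|\det(A)|=|\det(B)|$ and every row of $B$ carries at most four nonzero entries, which are $\pm1$ and alternate in sign (a $+1$ at the start of each block of ones and a $-1$ just past its end). Applying Proposition~\ref{proposition01-1} to $B$ immediately reproduces the bound $(2\cdot 2)^{n/2}=4^{n/2}$, so the entire task is to shave the base from $4$ down to $3.936$. First I would clean up $B$: if some row or column has at most one nonzero entry, expand $\det(B)$ along it and induct on $n$, since the minor is again a $\{-1,0,1\}$-matrix whose rows have at most four alternating nonzeros and $3.936^{(n-1)/2}\le 3.936^{n/2}$. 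Hence I may assume every row and every column of $B$ has at least two nonzero entries.

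The gain must come from the fact that, with every column doubly occupied, the rows of $B$ cannot be mutually orthogonal, exactly the phenomenon exploited in the remark following the proof of Theorem~\ref{2nboundthm}. Concretely, I would use the projection inequality~\eqref{eg4}, or equivalently row operations of type (G$_{\rm 2}$), to replace a relevant row $a$ by $a-\lambda b$ for a suitable companion row $b$ sharing a column with $a$; such a replacement preserves $\det(B\trsp B)$ but lowers $\|a\|_2$, and hence lowers the Frobenius norm $\|B\|_F^2=\operatorname{tr}(B\trsp B)$ that feeds the Hadamard/arithmetic--geometric-mean estimate of Proposition~\ref{proposition01-1}. The target is to push $\|B\|_F^2$ from its trivial value of at most $4n$ down to at most $3.936\,n$, after which Proposition~\ref{proposition01-1} delivers $|\det(B)|\le 3.936^{\,n/2}$. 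How many rows admit such a reduction, and by how much, should be organised according to the overlap type of the shared column, and the final constant $3.936$ would then be the value of the resulting small optimisation over the fractions of $2$-, $3$-, and $4$-entry rows, in the spirit of the linear program~\eqref{elin1}--\eqref{elin2} in Lemma~\ref{2nlem}b.

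The main obstacle is the sign pattern of $B$. In the $0/1$-setting of Theorem~\ref{2nboundthm} two rows sharing a column automatically have inner product at least $1$, but here two rows may share several columns whose $\pm1$ contributions cancel, so a shared column alone does not guarantee a useful angle, and a careless row operation can lengthen a companion row and undo the gain elsewhere. I would therefore try to control the cross terms globally rather than row by row: writing $d_p$ for the number of nonzeros in column $p$, the identity $\sum_{a\ne b}\langle B_{a,\LargerCdot},B_{b,\LargerCdot}\rangle^2=\sum_p d_p(d_p-1)+\Sigma$, with $\Sigma$ collecting the indefinite products over pairs of distinct columns, lower-bounds the spread of the eigenvalues of $B\trsp B$ once the at-least-two-per-column hypothesis is combined with a bound on $\Sigma$; a positive spread forces $\prod_i\lambda_i<(\operatorname{tr}(B\trsp B)/n)^n$ and hence a base strictly below $4$. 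Making the $\Sigma$-estimate rigorous—taming the cancellations coming from the alternating signs—is where the real work lies, and I expect the honest outcome to be only a small constant improvement, entirely consistent with the modest gap between $3.936$ and $4$.
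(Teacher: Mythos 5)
You have correctly reproduced the paper's starting move (the column-difference transformation producing $B$ with at most four alternating $\pm 1$ entries per row) and even the right strategy: shorten rows by projecting away a companion row whenever a non-orthogonal pair exists, then finish with Hadamard. But the heart of the paper's proof is exactly the step you flag as ``where the real work lies'' and then leave undone: a combinatorial lemma guaranteeing that non-orthogonal pairs \emph{must} exist as long as too many rows have four nonzero entries. Concretely, if $k$ rows have four nonzero entries and $4k>3n$, then by pigeonhole some column meets four such rows $r_1,r_2,r_3,r_4$; the paper then shows, using only the alternating sign pattern, that these four rows cannot be pairwise orthogonal (pairwise orthogonality would force each pair to share exactly two columns with cancelling signs, no three rows could share the same two columns, and normalising the shared column to $+1$ leads to two rows that share a column of $-1$ entries and hence cannot be orthogonal). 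This is precisely the argument that tames the sign cancellations you worry about, and without it your iteration has no engine: your proposed substitute --- the second-moment identity with the indefinite term $\Sigma$ --- is stated but not proved, and you yourself concede it is open. So the proposal has a genuine gap at the decisive point.

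There is also a quantitative flaw in your bookkeeping: you aim to push the Frobenius norm down to $3.936\,n$ and then invoke the arithmetic--geometric mean as in Proposition~\ref{proposition01-1}. But the natural outcome of the shortening process (which is what the paper achieves) is $k\leq 3n/4$ rows of squared norm $4$ and at least $n/4$ rows of squared norm at most $15/4$, giving $\mathrm{tr}(B\trsp B)\leq \tfrac{63}{16}n=3.9375\,n$ --- and $3.9375>3.936$, so the trace route misses the stated constant. The paper instead applies Hadamard's inequality to the \emph{product} of row norms, $2^k\left(\sqrt{15/4}\right)^{n-k}\leq \left(2\cdot 15^{1/4}\right)^{n/2}\approx 3.93598^{n/2}$, exploiting that the geometric mean of the mixed row lengths beats their arithmetic mean. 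A smaller remark: your clean-up induction (expanding along rows or columns with at most one nonzero entry) is not sound as stated, because deleting a column can destroy the alternating sign pattern of the remaining rows, so the inductive hypothesis need not apply to the minor; the paper avoids this by simply assuming the rows of $B$ are linearly independent, which is all that is needed.
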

\begin{proof}
Let $A\in \{ 0,1\}^{n\times n}$ have the $2$-consecutive ones property,
and let $B\in \{ -1,0,1\}^{n\times n}$ arise from $A$ exactly as in the beginning of the proof of Theorem \ref{theoremkcop}.
Note that every row of $B$ contains at most four non-zero entries,
and that, within every row that contains four non-zero entries, the $-1$-entries and the $1$-entries alternate.
Clearly, we may assume that the rows of $B$ are linearly independent, since otherwise $|\det(A)|=|\det(B)|=0$. 
Let $R$ be the set of rows of $B$ that contain four non-zero entries, let $S$ be the set of the remaining rows of $B$,
and let $S'=\emptyset$. \sloppy

Our strategy to improve the bound from Theorem \ref{theoremkcop}
exploits the fact that the rows in $R$ cannot all be pairwise orthogonal if $k=|R|$ is too big. More precisely, we will show that as long as $4k>3n$, there are two rows $a$ and $b$ in $R$ with $a \trsp b\not=0$. We remove $a$ from $R$, add the shorter row $a'=a-\frac{a \trsp b}{b \trsp b}b$ to $S'$, and iterate this process.
Once $4k\leq 3n$, at least $n/4$ rows are in $S\cup S'$.
Since a matrix whose set of rows is $R\cup S\cup S'$
has the same determinant as $B$, 
an application of Hadamard's inequality (\ref{eh}) 
will yield the desired bound.

Therefore, let $4k>3n$.
By the pigeon-hole principle, 
there are four rows $r_1$, $r_2$, $r_3$, and $r_4$ in $R$ that have a non-zero entry in the same column, 
say the column with index $j$.
Suppose, for a contradiction, that $r_i\trsp r_{i'}=0$ for every two distinct indices $i$ and $i'$ in $[4]$.
Since the $-1$-entries and the $1$-entries alternate in each row in $R$,
this implies that,
for every two distinct indices $i$ and $i'$ in $[4]$,
there are exactly two columns in which $r_i$ and $r_{i'}$ both have non-zero entries.
Furthermore, this latter property implies that there are no three rows among $r_1$, $r_2$, $r_3$, and $r_4$ 
that all have non-zero entries in the same two columns.
Since multiplying rows by $-1$ affects 
neither the absolute value of the determinant
nor the pairs of orthogonal rows,
we may assume that the rows $r_1$ to $r_4$ have entry $1$ in column $j$.
Let $j_2$, $j_3$, and $j_4$ be the column indices distinct from $j$ in which row $r_1$ has its non-zero entries
such that the entry in column $j_3$ is $1$.
Since no three rows among $r_1$, $r_2$, $r_3$, and $r_4$ have non-zero entries in the same two columns,
we may assume, by symmetry, that 
row $r_2$ has entry $1$ in column $j_2$,
row $r_3$ has entry $-1$ in column $j_3$, and
row $r_4$ has entry $1$ in column $j_4$.
It follows that in the unique column distinct from columns $j$, $j_2$, and $j_4$, 
in which $r_2$ and $r_4$ both have a non-zero entry, 
both these rows have a $-1$-entry, 
which implies the contradiction that they cannot be orthogonal.

Altogether, it follows that for $4k>3n$,
there are two columns $a$ and $b$ in $R$ that are not orthogonal.
By symmetry, we may assume that $a \trsp b>0$.
Since the rows of $B$ are linearly independent,
and since $a,b$ are integral, we obtain $a \trsp b\geq 1$.
Let $a'=a-\frac{a \trsp b}{b \trsp b}b$. Then 
\[
||a'||_2^2=||a||_2^2-\frac{(a\trsp b)^2}{||b||_2^2},
\]
which implies $||a'||_2\leq\sqrt{15/4}$ as $||a||_2^2=4=||b||_2^2$.

Iterating the replacement of rows from $R$ by shorter rows in $S'$ 
as long as $4k>3n$, 
we may thus assume that 
$R$ contains $k\leq 3n/4$ rows $r$ with $||r||_2=2$,
and that 
$S\cup S'$ contains $n-k\geq n/4$ rows $r$ with $||r||_2\leq \sqrt{15/4}$.
Since 
$$\left(\sqrt{15/4}\right)^{n-k}2^k\leq 
\left(\sqrt{15/4}\right)^{n/4}2^{3n/4}
=\left(\left(\sqrt{15/4}\right)^{1/2}2^{3/2}\right)^{n/2}
\leq 3.936^{n/2},$$ 
Hadamard's inequality (\ref{eh}) implies the desired bound.
\end{proof}

For the $2$-consecutive ones property, 
we can also improve the lower bound of $2^{\frac{n-2}{2}}\approx 1.414^{n-2}$
from Theorem \ref{theoremkcop}.
More precisely, for every positive integer $n$ divisible by $3$,
there is an $n\times n$ $0/1$-matrix $A$ that has the $2$-consecutive one property,
and satisfies 
$\det (A)=4^{\frac{n-3}{3}}\approx 1.587^{n-3}.$
The matrix $A$ is constructed 
 as in the proof of Theorem~\ref{theoremkcop} but with slightly different matrices $H$ and $R$:
\[
H=
\begin{pmatrix}
1 & 1 & -1\\
1 & -1 & 1\\
1 & -1 & -1
\end{pmatrix}\text{ and }
R=\begin{pmatrix}
-1 & 0 & 0\\
0 & 0 & 0\\
0 & 1 & 0
\end{pmatrix}
\]
We believe that this construction is essentially optimal:

\begin{conjecture}\label{conj2cop}
If $A\in \{ 0,1\}^{n\times n}$ has the $2$-consecutive ones property, then 
$|\det(A)|\leq 4^{n/3}.$
\end{conjecture}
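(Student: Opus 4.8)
The plan is to work, as in the proof of Theorem~\ref{theorem2cop}, not with $A$ itself but with the matrix $B\in\{-1,0,1\}^{n\times n}$ obtained from $A$ by the column-difference transformation, so that $|\det(A)|=|\det(B)|$ and every row of $B$ has at most four non-zero entries whose signs alternate. Unwinding the transformation, each block $[a,b]$ of ones in a row of $A$ becomes the signed pair $e_a-e_{b+1}$ (with $e_{n+1}$ discarded), so each row of $B$ is a sum of at most two oriented edges on the vertex set $\{1,\dots,n+1\}$. In this language $B$ is a generalised incidence matrix in which every row is an edge pair, and the extremal construction built from the matrices $H$ and $R$ is precisely a block-diagonal arrangement of $3\times 3$ bricks, each contributing the factor $\det(H)=4$. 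The target bound $4^{n/3}$ thus asserts that no configuration beats this brick, and a natural strategy is to prove the local statement $M(n)\le 4^{n/3}$, where $M(n)$ denotes the maximum of $|\det(A)|$ over all $n\times n$ matrices with the $2$-consecutive ones property, by reducing an arbitrary configuration to these bricks.

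First I would push the row-shortening argument of Theorem~\ref{theorem2cop} much further. There it is shown that once more than $3n/4$ rows have norm $2$, two of them fail to be orthogonal and one can be replaced by a row of squared norm at most $15/4$. To reach $4^{n/3}=2^{2n/3}$ through Hadamard's inequality~\eqref{eh}, one needs the squared row norms to have geometric mean at most $2^{4/3}\approx 2.52$, so a single shortening to $15/4=3.75$ is far from enough. The refinement I would attempt is amortised: track, for each surviving long row, how many other rows it overlaps, and charge each overlap a fixed multiplicative saving, so that a dense overlap pattern forces many simultaneous shortenings. This is the analogue, for the present edge-pair structure, of the linear-programming bookkeeping used in Lemma~\ref{2nlem}b, where the global estimate came from solving an LP over the component types $k_1,k_2,k_3$ rather than from a single local move.

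The cleaner route, and the one I would ultimately bet on, is a structural decomposition feeding into Proposition~\ref{proposition01-1}. Concretely, I would try to show that an extremal $B$ splits, after permuting rows and columns, into a block-triangular form whose diagonal blocks each involve at most three columns, so that $\det(B)$ factorises into determinants of matrices of order at most $3$ whose rows retain the alternating-sign structure. Each such factor is at most $4$ by direct inspection, with $H$ realising the maximum, and summing the orders gives $4^{n/3}$. The combinatorial heart would be a claim that, in an extremal configuration, the overlap graph of the edge pairs admits a separator structure forcing these small blocks, mirroring the way Lemma~\ref{2nlem}b's extremal graphs were forced to be disjoint triangles and short paths.

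The main obstacle is that Hadamard's inequality is intrinsically lossy here: the extremal brick $H$ has pairwise non-orthogonal rows, each inner product being $\pm 1$, so $|\det(H)|=4$ lies strictly below $\prod_i\|H_{i,\LargerCdot}\|_2=3\sqrt3\approx 5.2$, and no row-by-row application of~\eqref{eh}, however aggressive the preliminary shortening, can be tight on the very configuration conjectured to be extremal. A tight proof must therefore be global, controlling $\det(B\trsp B)$ through its whole eigenvalue profile or through an exact factorisation, and the real difficulty is to prove that the edge-pair overlap structure of an extremal matrix is forced to decompose into the $3\times 3$ bricks rather than into some denser pattern of larger determinant. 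Establishing that rigidity, and ruling out sporadic large configurations in the way $(n,k,\lambda)$-configurations appear in Ryser's bound, is where essentially all of the work lies.
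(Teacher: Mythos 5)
The statement you set out to prove is Conjecture~\ref{conj2cop}: the paper does not prove it, and offers no proof to compare yours against. What the paper does prove is the strictly weaker upper bound $|\det(A)|\leq 3.936^{n/2}$ of Theorem~\ref{theorem2cop}, together with the lower-bound construction of determinant $4^{(n-3)/3}$ built from the matrices $H$ and $R$; the gap between these two is precisely why the statement is left as a conjecture. So your proposal has to stand entirely on its own, and it does not: it is a research plan, not a proof, and you in effect concede this when you write that establishing the rigidity of the extremal structure ``is where essentially all of the work lies.''

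Concretely, neither of your two routes is carried out, and each stalls at an identifiable point. The amortised row-shortening route cannot, even in principle, reach $4^{n/3}$ through Hadamard's inequality~\eqref{eh}: as you yourself observe, the conjectured extremal brick $H$ has rows of norm $\sqrt3$ with pairwise inner products $\pm1$, so $\prod_i\|H_{i,\LargerCdot}\|_2=3\sqrt3>4=|\det(H)|$, and any argument of the form ``shorten some rows, then apply~\eqref{eh} row by row'' is therefore capped strictly above the target unless the shortening amounts to a full orthogonalisation --- at which point one is no longer estimating the determinant but computing it, and the combinatorial control over what orthogonalisation does to overlapping signed edge pairs is exactly what is missing. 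The block-decomposition route is a restatement of the conjecture rather than an attack on it: the claim that an extremal $B$ splits, after permutation, into diagonal blocks involving at most three columns is the rigidity statement one would need to prove, and you give no argument for it. The analogy you draw with Lemma~\ref{2nlem}b is also weaker than it looks: there, the reduction to triangles and short paths was forced because the Gramian of a graph incidence matrix is essentially structure-free, by~\eqref{egram}; here the overlap pattern of the edge pairs genuinely enters the determinant, and nothing you say rules out a denser sporadic configuration playing the role that $(n,k,\lambda)$-configurations play for Ryser's bound --- a possibility you name but do not exclude. Your diagnosis of the difficulty is accurate and consistent with the authors' decision to state this as a conjecture, but no step of either strategy is executed, so there is nothing here that could be checked, repaired, or completed into a proof.
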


\section{Path-edge incidence matrices}\label{pathsec}

In the previous section we have generalised the consecutive ones property by allowing more blocks
of ones. In this section we consider a different generalisation. 

Let $P$ be a path with $m$ edges, and let $\mathcal P$ be a set of some $m$ subpaths of $P$. Then 
the \emph{path-edge incidence matrix} $A(\mathcal P,P)\in\{0,1\}^{\mathcal P\times E(P)}$ defined as 
$$A({\cal P},P)_{Q,e}=
\begin{cases}
1 & \mbox{if $e\in E(Q)$, and}\\
0 & \mbox{otherwise}
\end{cases}$$
has the consecutive ones property (provided the edges are ordered as they appear in $P$).
Conversely, any matrix with the consecutive ones property
is the path-edge incidence matrix for suitable $\mathcal P$ and $P$. In this section we will consider
path-edge incidence matrices $A(\mathcal P,T)$, 
where $\mathcal P$ is a set of subpaths in a tree~$T$ rather than in a path.

We prove:

\begin{theorem}\label{twothm}
If $T$ is a tree on $n$ vertices, and $\mathcal P$ is a set of $n-1$ paths in $T$, then
$$|\det (A(\mathcal P,T))|\leq 2^{n-1}.$$
\end{theorem}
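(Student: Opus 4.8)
The plan is to induct on $n$, peeling off a leaf of $T$. For the base case $n\le 2$ the matrix is empty or a single entry in $\{0,1\}$, so $|\det (A(\mathcal P,T))|\le 1\le 2^{n-1}$. For the step, fix a leaf $v$ with neighbour $u$ and let $e=uv$ be the incident edge, which indexes one column of $A$. The key structural observation is that deleting $v$ behaves well: a path of $\mathcal P$ either avoids $v$, in which case it already lies in $T-v$, or has $v$ as an endpoint (the only possibility, as $v$ is a leaf), in which case removing $e$ turns it into the path $P(u,y)\subseteq T-v$. Hence every row of $A$, restricted to the columns $E(T)\setminus\{e\}=E(T-v)$, is the incidence vector of a path of the tree $T-v$ on $n-1$ vertices.

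First I would dispose of the case where at most two paths of $\mathcal P$ contain $e$. The nonzero entries of column $e$ are exactly the rows whose path ends at $v$, so there are at most two of them, and expanding $\det A$ along column $e$ produces at most two cofactors. By the observation above each cofactor is a path-edge incidence matrix of $T-v$ with $n-2$ paths, so the induction hypothesis bounds its absolute determinant by $2^{n-2}$, whence $|\det A|\le 2\cdot 2^{n-2}=2^{n-1}$. It is worth recording a global reformulation that explains the constant $2$ and why a one-shot estimate is delicate: rooting $T$ and writing $r_w$ for the edge-indicator of the root-path of $w$, the row of a path $P(a,b)$ with lowest common ancestor $c$ equals $r_a+r_b-2r_c$. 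Thus $A=MR$, where $R=(r_w)$ is the (unimodular, triangular) ancestor incidence matrix and $M$ has rows $\mathbf 1_a+\mathbf 1_b-2\mathbf 1_c$; since $|\det R|=1$ we get $|\det A|=|\det M|$. Applying Hadamard's inequality~\eqref{eh} to $M$ only yields $6^{(n-1)/2}$, since a bent path gives a row of squared norm $6$, so a refined, inductive argument really is needed rather than a global norm bound.

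The main obstacle is a leaf edge $e$ lying on three or more paths. Here I would first use row operations — subtracting one $v$-ending path from each of the others — to clear all but one entry of column $e$; expanding along $e$ then leaves an $(n-2)\times(n-2)$ matrix over $T-v$ whose rows are a mix of ordinary path vectors and oriented path vectors $\mathbf 1_{P(u,y_i)}-\mathbf 1_{P(u,y_1)}$. The trouble is that the naive expansion bounds $|\det A|$ by $(\text{number of paths through }e)\cdot 2^{n-2}$, which exceeds $2^{n-1}$; equivalently, writing $\det A$ as the signed cofactor sum $\sum_i \pm\det N_{\hat i}$ down column $e$, one must show the signs produce enough cancellation. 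I expect this to be the crux, and would attack it either by strengthening the induction hypothesis to a class of $\{-1,0,1\}$ path/oriented-path matrices closed under these operations, or by exploiting that the cofactor vector is the essentially unique linear dependence among the $n-1$ path vectors of $T-v$, whose entries can then be controlled. A useful sanity check points the way: if \emph{every} leaf edge met at least three paths, then, since the $n-1$ paths carry only $2(n-1)$ endpoints in total, $T$ would have few leaves and be close to a path — a regime where total-unimodularity–type bounds keep the determinant far below $2^{n-1}$. This suggests an accompanying case split, handling nearly path-like trees directly while branch-rich trees always offer a leaf meeting at most two paths.
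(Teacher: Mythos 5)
Your reduction to the case where some leaf edge lies on at most two paths is fine, and your factorisation $A=MR$ (equivalently: column operations along a rooted tree, turning the row of a path with endpoints $a,b$ and lowest common ancestor $c$ into a vector with entries $1,1,-2$) is exactly the right preprocessing --- the paper performs the same transformation and likewise observes that Hadamard alone then only yields $6^{(n-1)/2}$. But the proposal has a genuine gap: the case in which \emph{every} leaf edge lies on three or more paths is never resolved. You correctly identify it as the crux, but what you offer there is a list of candidate attacks (strengthen the induction hypothesis to a class of $\{-1,0,1\}$-matrices closed under the row operations; control the cofactor vector as the unique linear dependence), not an argument. Indeed, after subtracting one $v$-ending row from the others, the rows of the resulting cofactor matrix are differences of path vectors, so they are no longer path-edge incidence vectors and the induction hypothesis simply does not apply to them; nothing in the proposal supplies the missing bound for this reduced matrix.

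Moreover, the escape route you sketch --- that if every leaf edge meets at least three paths then $T$ has few leaves, is ``close to a path'', and total-unimodularity-type bounds keep the determinant far below $2^{n-1}$ --- is false. Counting endpoints only gives that the number of leaves is at most $\tfrac{2}{3}(n-1)$, which still permits linearly many leaves; and the paper's own extremal construction (a tree whose internal vertices all have degree $3$, with all $n-1$ paths chosen as leaf-leaf paths, so that every leaf edge lies on several paths) has determinant $2^{\frac{2}{3}n-\frac{5}{3}}$. So the hard case is not a degenerate fringe: it contains the extremal examples, and their determinants are exponentially large. The paper's actual proof handles this case by a mechanism quite different from leaf-peeling induction: after the column operations it runs a sequence of row operations maintaining five invariants (positive entries pairwise incomparable in the tree order; at most one negative entry; the negative entry below the positive ones; all entries of absolute value at most $1$ except the $-2$'s; positive entries summing to at most $2$), which isolates the $-2$-columns into a diagonal block and leaves every remaining row $c$ with $\|c\|_2^2\le \|c\|_1\cdot\|c\|_\infty\le 4$; expanding along the $-2$-columns and applying Hadamard to the rest then gives $2^{n-1}$. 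A quantitative control of the reduced rows of this kind is exactly what your proposal lacks, so as it stands the theorem is not proved.
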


There is some relation between the two generalisations of the consecutive ones property. 
Trivially, if $T$ is the subdivision of a tree with $m$ edges, then $A(\mathcal P,T)$ has the $m$-consecutive ones property. 
Yet, not even all matrices with the $2$-consecutive ones property can be represented 
in the  form $A(\mathcal P,T)$.
As an example, one may easily check that the matrix
\[
\begin{pmatrix}
1 & 1 & 1 & 1\\
1 & 1 & 1 & 0\\
0 & 1 & 1 & 1\\
1 & 0 & 0 & 1
\end{pmatrix}
\]
has the $2$-consecutive ones property but is not of the type $A(\mathcal P,T)$ for any tree $T$ and 
set of subpaths $\mathcal P$.

\begin{proof}[Proof of Theorem~\ref{twothm}]
Select a vertex $r$ in $T$ and consider $T$ as a rooted tree with root $r$. 
We denote by $\leq $ the tree order with minimal element $r$.

We turn the incidence matrix $A(\mathcal P,T)$ into a matrix $A$ by subtracting from 
every column with index $e$ all columns with index $e'$ such that $e$ is the unique ancestor edge
of $e'$ in the tree order; this is done for all $e$ in breadth-first search order,
that is, in non-decreasing tree order.
Obviously, $A(\mathcal P,T)$ and $A$ have the same determinant.

For every $P\in\mathcal P$, the non-zero entries of row $A_{P,\LargerCdot}$ 
fall into one of four cases:
If $e,e'$ are the two leaf-edges of $P$, then
\begin{itemize}
\item if $e$ and $e'$ are incomparable in the tree order and $P$ avoids $r$, then 
$A_{P,\LargerCdot}$ has entry $1$ at $e$ and at $e'$, and entry $-2$ at 
the ancestor edge of the vertex of $P$ that is smallest in the tree order;
\item if $e$ and $e'$ are incomparable but $P$ passes through $r$, then $A_{P,\LargerCdot}$
has entry $1$ at $e$ and at $e'$ (and no entry $-2$);
\item if $e\leq e'$ and $P$ avoids $r$, 
then the row has entry $-1$ at the ancestor edge of $e$ and entry $1$ at $e'$; and
\item if $e\leq e'$ and $P$ ends in $r$, 
then the row has a unique non-zero entry of $1$ at $e'$.
\end{itemize}
Note that at this point, 
Hadamard's inequality (\ref{eh}) already implies 
$$|\det (A(\mathcal P,T))|\leq {6}^\frac{n-1}{2}.$$
In order to obtain our better bound,
we still need to work a bit.

\medskip

\noindent Denote by $F$ the set of edges $e$ such that the column $A_{\LargerCdot,e}$
has an entry $-2$.
For every $P\in\mathcal P$, the row $c=A_{P,\LargerCdot}$
has the following properties:
\begin{enumerate}[(i)]
\item\label{posincomp} All edges $e$ with positive entries $c_e>0$ are incomparable in the tree order.
\item\label{singleneg} $c$ has at most one negative entry, and if $c_e=-2$, then $e\in F$.
\item\label{posneg} If $c$ is negative at edge~$e$ and positive at edge~$f$, then
 $e\leq f$.
\item\label{absval} If $c_{e}\neq -2$, then $|c_{e}|\leq 1$ for every edge~$e$.
\item\label{possum} The sum of all positive entries is at most~$2$, that is, 
$\sum\limits_{e\in E(T):c_e>0}c_e\leq 2$.
\end{enumerate}
We now manipulate the rows of $A$ without changing the determinant and
without losing properties (i)--(v) for any row.
Let $F=\{e_1,\ldots,e_k\}$ and let $i$ be smallest such that the column $A_{\LargerCdot,e_i}$
also contains a positive entry.
 
Suppose that $P$ is a row index with $A_{P,e_i}=-2$ 
and that $Q$ is a row index with $A_{Q,e_i}>0$. 
Set $\alpha=A_{Q,e_i}$ and observe that $\alpha\leq 1$ by~\eqref{absval}.
We replace the row $A_{Q,\LargerCdot}$ 
by a new row $A_{Q,\LargerCdot}+\tfrac{\alpha}{2}A_{P,\LargerCdot}$ 
and denote the new matrix by $A'$. Clearly, $\det (A')=\det (A)$.

We claim that if $A$ had properties (i)--(v), then also $A'$ has these properties. 

Indeed,~\eqref{possum} for $A'$ follows 
from~\eqref{possum} for $A$ as 
the entry $A_{Q,e_i}=\alpha$ decreases to $A'_{Q,e_i}=0$, and
$$
\frac{\alpha}{2}\sum_{e\in E(T):A_{P,e}>0}A_{P,e}\leq \frac{\alpha}{2}\cdot 2= \alpha.
$$
In a similar way,~\eqref{singleneg} for $A'$ follows from~\eqref{singleneg} for~$A$
and $\alpha>0$. 

If $A_{Q,\LargerCdot}$ has no negative entry, then~\eqref{posneg} holds trivially
for $A'_{Q,\LargerCdot}$ as $\alpha>0$.
So, assume there is an edge~$e$ with  $A_{Q,e}<0$. 
It follows that $e$ is strictly smaller in the tree order than any $f$ with $A_{P,f}>0$:
By~\eqref{posneg} for $A_{Q,\LargerCdot}$, the edge $e$ is strictly smaller than $e_i$ as $A_{Q,e_i}>0$,
and, by~\eqref{posneg} for $A_{P,\LargerCdot}$, the edge $e_i$ is smaller than any edge~$f$ with $A_{P,f}>0$
as $A_{P,e_i}<0$. 
This ensures~\eqref{posneg} for $A'$ as $A'_{Q,\LargerCdot}$ can only have a negative entry at~$e$.

Now, let $g$ be an edge of $P$ 
with $A_{P,g}>0$ and let $h\not\in \{ e_i,g\}$ be an edge of $Q$ with $ A_{Q,h}>0$. 
Suppose that $g$ and $h$ are comparable. If $h\leq g$, then as also $e_i\leq g$, 
it follows that $h$ and $e_i$ are comparable, which is impossible by~\eqref{posincomp} for $A_{Q,\LargerCdot}$. 
If, on the other hand, $g\leq h$, then $e_i\leq g$ implies $e_i\leq h$,
which is again impossible. 
Thus, $g$ and $h$ are incomparable. 
This guarantees~\eqref{posincomp} for $A'$.
Since $\tfrac{\alpha}{2}\leq 1$,
it also guarantees~\eqref{absval} 
as it implies that no two positive entries are added when forming the new row~$A'_{Q,\LargerCdot}$.

Note that $A$ and $A'$ have the same columns with $-2$~entries,
namely those with index in $F$, and 
that no column of $A'$ with index in $\{e_1,\ldots, e_{i-1}\}$ contains
a positive entry since this is also the case for $A$. 
Note, moreover, that $A'$ has one fewer positive entry in column $A'_{\LargerCdot, e_i}$,
as $A'_{Q,e_i}=0$.
Thus, by replacing $A$ by $A'$ and iteratively repeating this procedure,
we can eliminate all positive entries in $A_{\LargerCdot,e_i}$. 
Doing this for all $i$ results in a matrix $B$ 
that satisfies~(i)--(v), for which $\det (B)=\det (A)=\det (A(\mathcal P,T))$
and for which
\begin{equation}\label{nonpos}
\text{\em 
no column with index in $F$ contains a positive entry.
}
\end{equation}
Define for such matrices $ B$ two sets of row indices: 
\begin{itemize}
\item $\mathcal R(B)$,  the set of those $P\in\mathcal P$ for which 
an edge $e$ exists such that the column $B_{\LargerCdot,e}$
has a $-2$ at $P$ and zeros everywhere else; and 
\item $\mathcal S( B)$,  the set of those $P\in\mathcal P$ that are~$0$
restricted to $F$ and 
for which $||B_{P,\LargerCdot}||_2\leq 2$.
\end{itemize}
As long as there is some $P\in\mathcal P\setminus (\mathcal R(B)\cup\mathcal S(B))$
such that the row $B_{P,\LargerCdot}$ has an entry of~$-2$, 
we will iteratively modify $B$ so that the determinant does not change, 
properties (i)--(v) are maintained for every row with index not in $\mathcal R(B)\cup\mathcal S(B)$,
and \eqref{nonpos} is maintained
in such a way that $\mathcal R(B)\cup \mathcal S(B)$ grows in each step.

Assume that there is 
$P\in\mathcal P\setminus (\mathcal R(B)\cup\mathcal S(B))$
such that the row $ B_{P,\LargerCdot}$ has an entry of~$-2$ at the edge~$e$, say.
By~\eqref{singleneg}, $e\in F$.
Since $P\notin\mathcal R(B)$, 
there is a second non-zero entry in the column
$B_{\LargerCdot,e}$; let this be in row~$Q\in\mathcal P$. 
By~\eqref{nonpos}, we have $ B_{Q,e}<0$. 
By the definition of $\mathcal R(B)$ and $\mathcal S(B)$, 
it follows that $Q\notin\mathcal R(B)\cup\mathcal S(B)$. 
In particular, (i)--(v) hold for the row $B_{Q,\LargerCdot}$.
Set $\beta=B_{Q,e}$ and replace the row $B_{Q,\LargerCdot}$  
by $B_{Q,\LargerCdot}+\tfrac{\beta}{2}B_{P,\LargerCdot}$.
This results  in a matrix $B'$ of the same determinant as~$B$. 
By~\eqref{singleneg} for the rows $B_{P,\LargerCdot}$ and $B_{Q,\LargerCdot}$, and by \eqref{nonpos},
the new row $B'_{Q,\LargerCdot}$ is~$0$ in $F$, which means that $B'$
satisfies~\eqref{nonpos}.

By~\eqref{absval} for $B_{Q,\LargerCdot}$, 
we have $\beta\in [-2,0)$, which implies, 
together with~\eqref{absval} and~\eqref{singleneg} applied to $P$ and $Q$, 
that every entry of $B'_{Q,\LargerCdot}$ 
has absolute value at most~$1$, that is, $||B'_{Q,\LargerCdot}||_\infty\leq 1$. 
Moreover, from~\eqref{possum} we get
$$
||B'_{Q,\LargerCdot}||_1
\leq\sum_{f:B_{Q,f}>0}B_{Q,f}+\frac{|\beta|}{2}
\sum_{f:B_{P,f}>0}B_{P,f}\leq 2+1\cdot 2=4.
$$
This implies
$$
||B'_{Q,\LargerCdot}||_2\leq\sqrt{||B'_{Q,\LargerCdot}||_1\cdot ||B'_{Q,\LargerCdot}||_\infty}
\leq \sqrt{4\cdot 1}=2,
$$
which means that $Q\in\mathcal S(B')$.
Since the new row $B'_{Q,\LargerCdot}$ is~$0$ in $F$, we have
$\mathcal R(B)\subseteq\mathcal R(B')$. As also 
$\mathcal S(B)\subseteq\mathcal S(B')$,
it follows that 
$\mathcal R(B')\cup\mathcal S(B')$ strictly includes $\mathcal R(B)\cup\mathcal S(B)$.

As the rows of $B'$ with index outside of $\mathcal R(B')\cup\mathcal S(B')$ 
are the same as in $B$, it is clear that such rows still satisfy~(i)--(v). 

\medskip

 Let $C$ be the matrix obtained by a maximal sequence of such modifications,
that is, for every $P\in {\cal P}\setminus (\mathcal R(C)\cup\mathcal S(C))$, 
the row $ C_{P,\LargerCdot}$ has no entry $-2$.
If $P\in {\cal P}\setminus (\mathcal R(C)\cup\mathcal S(C))$, 
then \eqref{absval} holds for $P$, and, hence, $||C_{P,\LargerCdot}||_\infty\leq 1$.
Now,
$$
||C_{P,\LargerCdot}||_2\leq\sqrt{1\cdot ||C_{P,\LargerCdot}||_1}\,
\overset{\eqref{singleneg},\eqref{absval}}{\leq} \sqrt{1+\sum_{f:C_{P,f}>0}C_{P,f}}\leq\sqrt 3,
$$
where the last inequality follows from~\eqref{possum} for $P$. 
As a consequence, we always get  $|| C_{P,\LargerCdot}||_2\leq 2$
for every $P\in\mathcal P\setminus \mathcal R(C)$.
Note that the restriction of $C$ to rows in $\mathcal R(C)$ and columns in $F$
is, after permuting rows and columns, a diagonal matrix with $-2$-entries in the diagonal. 
Thus, expanding the determinant of $C$ along the columns with index in $F$, followed by an application 
of Hadamard's inequality (\ref{eh}), results in $|\det (A(\mathcal P,T))|=|\det (C)|\leq 2^{n-1}$.
\end{proof}

 The bound can be improved by a factor of $4$ 
by choosing the root as a leaf in which at least two paths from $\mathcal P$ end,
which results in two unit vectors in $A(\mathcal P,T)$ along which one can expand the determinant.

We close this section with the construction of a tree $T$ of arbitrarily large order $n$, 
and a set ${\cal P}$ of $n-1$ paths in $T$ 
such that $|\det (A(\mathcal P,T))|=2^{\frac{2}{3}n-\frac{5}{3}}$,
which limits the possible improvements of Theorem~\ref{twothm}.

For any integer $n'$, let $n\geq n'$ such that there is a tree $T$ on $n$ vertices
in which all internal vertices have degree~$3$, and 
in which there is a vertex $r$ that has the same distance, $d$ say, to every leaf. 
Note that $T$ has $n-1=3(2^d-1)$ edges, and consider $T$ as rooted in $r$.

The set $\mathcal P$ of $m$ paths consists of two subsets, $\mathcal P_I$ and $\mathcal P_L$. 
For every edge $uv$ of $T$ between two non-leaves $u$ and $v$ of $T$ such that $u$ is the ancestor of $v$, 
select a leaf-leaf path $P_{uv}$ containing $v$ but not $u$; 
we denote the set of these paths by $\mathcal P_I$. 
Note that $|\mathcal P_I|=3(2^{d-1}-1)=\tfrac{n}{2}-2$.
For the set $\mathcal P_L$ partition the leaves of $T$ into triples $\{ a,b,c\}$
such that $a$, $b$, and $c$ are pairwise separated by the root $r$. 
For each such triple add 
the $a$-$b$~path $Q_{a,b}$, 
the $b$-$c$~path $Q_{b,c}$, and 
the $c$-$a$~path $Q_{c,a}$ to $\mathcal P_L$. 
Note that $|\mathcal P_L|=3\cdot 2^{d-1}=\tfrac{n}{2}+1$.

As in the proof of the theorem, we turn the incidence matrix $A(\mathcal P,T)$ 
into a matrix $A$ by subtracting from each column the columns of direct descendant edges,
where we go through the columns/edges in breadth-first search order. 
Any row of $A$ 
belonging to a path $P_{uv}$ in $\mathcal P_I$ has 
two $+1$-entries at leaf-edges, 
a $-2$-entry at $uv$, 
and~$0$-entries everywhere else. 
Any row belonging to a path $Q_{a,b}$ in $\mathcal P_L$ has 
a $+1$-entry at the leaf-edge incident with $a$,
a $+1$-entry at the leaf-edge incident with $b$, 
and $0$-entries everywhere else. 
In particular, up to a permutation of its rows and columns, 
the matrix $A$ has the following form:

$$
\begin{pmatrix}
\begin{matrix}
-2 & \ldots  & 0\\
\vdots  & \ddots & \vdots \\
0  &    \ldots    & -2 \\
\end{matrix} 
& * \\
0 &
\begin{matrix}
C & \ldots  & 0\\
\vdots  & \ddots & \vdots \\
0  &    \ldots    & C \\
\end{matrix}
\end{pmatrix}, \text{ where }
C=
\begin{pmatrix}
1 & 1 & 0 \\
0 & 1 & 1 \\
1 & 0 & 1 
\end{pmatrix}.
$$
Since $\det (C)=2$, we obtain
$
|\det (A(\mathcal P,T))|=2^{|\mathcal P_I|}\cdot 2^{\frac{1}{3}|\mathcal P_L|}
=2^{\frac{2}{3}n-\frac{5}{3}}.
$

\section{An upper bound on the leaf rank}\label{sec_lr}

Let us give an application of 
Theorem~\ref{twothm}.

There are various ways to capture a graph, or at least its essential properties, 
in terms of a tree. This often helps to understand the graph, since 
trees have  a particularly simple structure. 
While tree decompositions are certainly the most prominent example,  other approaches
have been pursued as well. In the context of  phylogenetic trees, for instance,  
Nishimura et al.~\cite{nirath} proposed the concept of \emph{leaf roots}: 
Given a graph $G$ with vertex set $V(G)$
a tree $T$ is a \emph{$k$-leaf root} of~$G$, for some integer $k$,
if 
\begin{itemize}
\item the set of leaves of $T$ equals $V(G)$, and,
\item any two distinct vertices $u$ and $v$ of $G$ are adjacent in $G$ 
if and only if ${\rm dist}_T(u,v)\leq k$, that is, if their distance in $T$ is at most~$k$.
\end{itemize}

Not all graphs have $k$-leaf roots for any given $k$, or indeed any $k$ at all. 
Brandst\"{a}dt et al.~\cite{bhmw} define the \emph{leaf rank} of a graph $G$:
if there is an integer $k$ such that $G$ has a $k$-leaf root then let the leaf rank of $G$
be the smallest such $k$. If for no $k$ the graph $G$ has a $k$-leaf root then we say that 
$G$ has infinite leaf rank. The $4$-cycle, for instance, has infinite leaf rank.

The structure of graphs with leaf rank at most~$4$ is well understood~\cite{brle,brlera,doguhuni,ra}. 
For leaf rank~$5$ at least an efficient recognition algorithm has been described~\cite{chko},
but for larger leaf rank both a  structural characterisation  and efficient recognition algorithms are  
unknown. 

Even simpler properties are open, such as: How large, in terms of the order, can the leaf rank be (provided
it is finite)? 
While 
Brandst\"{a}dt et al.~\cite{bhmw}
prove that ptolemaic and interval graphs of order $n$ have leaf rank at most~$2n$, 
for general graphs
an upper bound in terms of $n$ on the leaf rank  
was previously known. We prove such a bound:

\begin{theorem}\label{theoremlrleq}
Every graph of order~$n$ has either infinite leaf rank or leaf rank 
 at most $2n2^{2n}$.
\end{theorem}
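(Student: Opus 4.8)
The plan is to reduce the existence of a short leaf root to an integer linear feasibility problem and then to control the size of a smallest feasible solution through the determinant bound of Theorem~\ref{twothm}. Suppose $G$ has a $k$-leaf root $T$ for some $k$; we may assume $G$ has at least one edge and at least one non-edge, since the empty graph and the complete graph have leaf rank at most~$1$ and~$2$ respectively (realised by a star). Suppressing every internal vertex of degree~$2$ in $T$ and recording the number of suppressed edges as an integer weight $w_e\geq 1$ on the resulting edge, we obtain a topological tree $T_0$ whose leaves are exactly $V(G)$, all of whose internal vertices have degree at least~$3$, and on which the weighted leaf distances realise $G$ with threshold $k$. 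Since $T_0$ has $n$ leaves it has at most $m\leq 2n-3$ edges, and since subdividing a positively-integer-weighted edge back into unit edges turns any such weighted realisation into a genuine leaf root of the same threshold, it suffices to find \emph{some} integer weighting $w\in\mathbb Z_{\geq 1}^{E(T_0)}$ and integer threshold $k'$ with $k'\leq 2n2^{2n}$ such that $\sum_{e\in P_{uv}}w_e\leq k'$ for every edge $uv\in E(G)$ and $\sum_{e\in P_{uv}}w_e\geq k'+1$ for every non-edge, where $P_{uv}$ denotes the path in $T_0$ between the leaves $u$ and $v$.

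Next I would treat this as the linear program in the variables $(w,k)$ that minimises $k$ subject to $w_e\geq 1$, to $\sum_{e\in P_{uv}}w_e-k\leq 0$ for edges, and to $\sum_{e\in P_{uv}}w_e-k\geq 1$ for non-edges. The given root shows this polyhedron $Q$ is non-empty, and because $G$ has both an edge and a non-edge one checks that $Q$ is pointed and that $k$ is bounded below on $Q$, so the minimum is attained at a vertex $(w^*,k^*)$. At that vertex $m+1$ linearly independent constraints are tight, giving an invertible integer matrix $B$ with $|\det B|\geq 1$; by Cramer's rule $k^*=\det(B_k)/\det(B)$, and every coordinate of the vertex has denominator dividing $\det B$. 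The key observation is that scaling any feasible point of $Q$ by an integer $D\geq 1$ again yields a feasible point, now with threshold $Dk^*$ and with the slack on the non-edge constraints multiplied by $D\geq 1$. Taking $D=|\det B|$ clears all denominators, so $(Dw^*,Dk^*)$ is an \emph{integer} feasible solution, and the product telescopes:
\[
Dk^*=|\det B|\cdot\frac{\det(B_k)}{\det B}=\pm\det(B_k).
\]
Hence $G$ admits an integer weighting with threshold $k'=|\det(B_k)|$, so the leaf rank is at most $|\det(B_k)|$; note that the individually large quantities $k^*$ and $D$ have become irrelevant.

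It then remains to bound $|\det(B_k)|$. Here $B_k$ is an $(m+1)\times(m+1)$ integer matrix whose columns are the $m$ edge-columns together with one $\{0,1\}$-valued right-hand-side column, and whose rows are either unit vectors (from tight constraints $w_e=1$) or restrictions of path-incidence vectors $\chi_{P_{uv}}$. Expanding the determinant along the right-hand-side column and then along the unit rows reduces $|\det(B_k)|$ to a sum of at most $m+1$ terms, each the absolute value of the determinant of a square submatrix of the path-edge incidence matrix of $T_0$. The final point is that any such $j\times j$ submatrix is itself a path-edge incidence matrix: contracting the edges outside the chosen column set turns $T_0$ into a tree on $j+1$ vertices in which each chosen path either survives as a path or collapses to a zero row, so Theorem~\ref{twothm} bounds the minor by $2^{j}\leq 2^{m}$. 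Since $m\leq 2n-3$, this gives
\[
|\det(B_k)|\leq (m+1)\,2^{m}\leq (2n-2)\,2^{2n-3}\leq 2n\,2^{2n},
\]
which is the desired bound.

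The main obstacle I anticipate is precisely this last determinantal estimate. Theorem~\ref{twothm} applies to path-edge incidence matrices, whereas $B_k$ carries two foreign columns — the threshold column of $\mp 1$'s (eliminated already by passing to the homogeneous matrix via the $k$-variable) and the $\{0,1\}$ right-hand side — as well as unit rows. The crux is to strip these away by Laplace expansion and to recognise, via edge contraction, that the surviving minors are genuine path-edge incidence determinants to which Theorem~\ref{twothm} may be applied. The remaining ingredients — pointedness of $Q$, the Cramer-plus-scaling telescoping, and the equivalence between genuine and integer-weighted leaf roots — are routine once the set-up is in place.
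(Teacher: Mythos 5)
Your proposal is correct, and its engine is the same as the paper's: encode the edge lengths and the threshold as a linear system whose feasibility is equivalent to the existence of a weighted leaf root on a fixed tree, pass to a vertex of the polyhedron, combine Cramer's rule with the observation that scaling any feasible point by a positive integer preserves feasibility (so the scaled vertex is an integral solution whose threshold is $\pm\det(B_k)$), and finally bound that determinant using Theorem~\ref{twothm}. Where you genuinely diverge is in how the non-path constraints are eliminated. The paper quantifies over all trees whose leaf set is $V(G)$, picks one of \emph{minimum order} with non-empty polyhedron, and shows (by contracting a zero-length edge) that every feasible point is strictly positive; the non-negativity constraints are then redundant, so the vertex is cut out by path constraints alone, and each minor in the expansion along the right-hand-side column $\mathbf{b}$ is literally a matrix $A(\mathcal P',T)$ to which Theorem~\ref{twothm} applies verbatim. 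You instead fix the suppressed tree $T_0$ of a given leaf root and keep the constraints $w_e\geq 1$, so your vertex basis contains unit rows; after expanding along $\mathbf{b}$ and along those unit rows you are left with \emph{column-restricted} path-incidence minors, and you need the additional observation that restricting $A(\mathcal P,T_0)$ to a column set $S$ equals $A(\mathcal P',T_0/(E\setminus S))$, i.e.\ that edge contraction turns such a minor back into a path-edge incidence matrix. That observation is right (images of paths under edge contraction in a tree are paths), but you should make explicit that the degenerate cases it creates --- a path collapsing to a single vertex, or two paths becoming identical after contraction --- yield zero rows or repeated rows and hence zero determinant, so the bound $2^{|S|}\leq 2^m$ of Theorem~\ref{twothm} still applies even though that theorem is phrased for a set of $n-1$ paths. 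The trade-off: the paper's minimal-tree trick makes the determinant step immediate but requires the redundancy argument for non-negativity; your version skips that argument at the price of the contraction lemma, which is a clean and reusable fact, and both routes end with the same estimate, $k'\leq (m+1)2^{m}\leq 2n\,2^{2n}$.
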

Is the bound tight? On the contrary, it appears incredibly loose:
We do not know 
of any graph on $n$ vertices that has  finite leaf rank  larger than $n$.
Any polynomial bound in terms of $n$ would be very much welcome.  
\begin{proof}
Let $G$ be a graph  of order $n$, and assume $G$ to have finite leaf rank.
For a tree $T$ whose set of leaves equals the vertex set $V(G)$ of $G$, 
we consider the following system ${\cal I}(T)$ of linear inequalities 
using the $n(T)$ real variables $(\ell_e)_{e\in E(T)}$ and $k$
$$
{\cal I}(T):\,
\begin{cases}
\ell(uTv) -k \leq 0 &\mbox{for every pair $u,v$ of adjacent vertices of $G$, and}\\
\ell(uTv) -k \geq 1 &\mbox{for every pair $u,v$ of non-adjacent vertices of $G$,}
\end{cases}
$$
where $uTv$ is the path in $T$ between the leaves $u$ and $v$ of $T$, and 
$\ell(P)=\sum_{e\in E(P)}\ell_e$ for any path $P$.

If $T$ is a $k$-leaf root of $G$,
then $((\ell_e)_{e\in E(T)},k)$ with $\ell_e=1$ for $e\in E(T)$ is a non-negative integral solution of ${\cal I}(T)$.
That is, the polyhedron 
$$\ph P(T)=\Big\{ ((\ell_e)_{e\in E(T)},k)\in \mathbb{R}^{n(T)}_{\geq 0}:((\ell_e)_{e\in E(T)},k)\mbox{ satisfies }{\cal I}(T)\Big\}$$
is non-empty. Conversely, for any tree $T$ whose set of leaves equals $V(G)$ 
\begin{equation}\label{intpoint}
\text{if $(\ell,k)$ is an integral point of $\ph P(T)$ then the leaf rank of $G$ is at most~$k$.}
\end{equation}

Now, let $T$ be a tree of minimum order such that $\ph P(T)$ is non-empty.
Clearly, the tree $T$ has no vertex of degree $2$,
which implies that the number of its internal vertices is not larger than the number of its leaves.
Consequently
\begin{equation}\label{nbound}
n(T)\leq 2n.
\end{equation}

Suppose that $\ph P(T)$ contains a point $((\ell_e)_{e\in E(T)},k)$ that has a zero entry. 
As $k=0$ is impossible there is then some edge $e$ of $T$ with  $\ell_e=0$.
Contracting the edge $e$ within $T$ yields a smaller tree $T/e$ 
such that $\ph P(T/e)$ is non-empty,
which contradicts the choice of $T$.
Therefore, every point of $\ph P(T)$ has only strictly positive entries. 
By the convexity of polyhedra, this implies that the non-negativity condition is redundant,
that is,
$$\ph P(T)=\Big\{ ((\ell_e)_{e\in E(T)},k)\in \mathbb{R}^{n(T)}:((\ell_e)_{e\in E(T)},k)\mbox{ satisfies }{\cal I}(T)\Big\}.$$
Since $\ph P(T)$ is contained in the positive orthant $\mathbb R_{\geq 0}^{n(T)}$
and as it is non-empty, the polyhedron $\ph P(T)$ must have a vertex $x$.
By a theorem of Hoffman and Kruskal \cite{hk}, 
there is a subsystem ${\cal I}'(T)$ of ${\cal I}(T)$ 
such that  $x$
 is the unique solution of the system of linear equalities obtained by turning the inequalities 
in ${\cal I}'(T)$ into equalities. Since the ambient space is $n(T)$-dimensional, we 
 may assume $|{\cal I}'(T)|=n(T)$.

In view of the structure of ${\cal I}(T)$,
there are two sets ${\cal P}_0$ and ${\cal P}_1$ of altogether $n(T)$ paths in $T$ such that 
$$\{ x\}=\Big\{ ((\ell_e)_{e\in E(T)},k)\in \mathbb{R}^{n(T)}:
\ell(P)-k=i\mbox{ for every $i\in \{ 0,1\}$ and every $P\in {\cal P}_i$}\Big\}.$$
Let us write that in a slightly more concise way. Set
${\cal P}={\cal P}_0\cup {\cal P}_1$ and 
${\bf -1}=\trsp{(-1,\ldots,-1)}\in \mathbb{R}^{n(T)}$. Then there is a
  $0/1$-vector ${\bf b}$ of length $n(T)$ such that  
 $x$ is the unique solution of the system $(A({\cal P},T),{\bf -1})x={\bf b}$ of linear equalities.

Denote by $A_i$ the matrix obtained from $(A({\cal P},T),{\bf -1})$ be replacing the $i$th
column by $\mathbf b$. Then, by Cramer's rule,
\[
x_i=\frac{\det A_i}{\det((A({\cal P},T),{\bf -1}))}.
\]
In particular,  the last entry $k$ of $x$ equals 
\[
k=\frac{\det((A({\cal P},T),{\bf b}))}{\det((A({\cal P},T),{\bf -1}))}.
\]
Thus, $|\det((A({\cal P},T),{\bf -1}))|x$ is an integral point of $\ph P(T)$. Recalling~\eqref{intpoint},
we therefore see that 
\[
k':=|\det((A({\cal P},T),{\bf -1}))| k = |\det((A({\cal P},T),{\bf b}))|
\]
is an upper bound on the leaf rank of $G$.
Expanding the determinant of the matrix $(A({\cal P},T),{\bf b})$ along the last column ${\bf b}$,
we obtain, by Theorem \ref{twothm} and~\eqref{nbound}, that 
$$k'=|\det((A({\cal P},T),{\bf b}))|\leq n(T)2^{n(T)-1}\leq (2n)2^{2n}.$$
\end{proof}

\vfill

\small
\vskip2mm plus 1fill
\noindent
Version \today{}
\bigbreak

\noindent
Henning Bruhn
{\tt <henning.bruhn@uni-ulm.de>}\\
Dieter Rautenbach
{\tt <dieter.rautenbach@uni-ulm.de>}\\
Institut f\"ur Optimierung und Operations Research\\
Universit\"at Ulm\\
Germany\\

\end{document}